\definecolor{rojo}{rgb}{1,0,0}
\definecolor{abelian}{cmyk}{0.50,0,1,.4}
\definecolor{noabelian}{cmyk}{0.94,0.54,0,0}
\definecolor{rojo}{cmyk}{0,1,1,0}
\definecolor{verde}{cmyk}{0.91,0,0.88,0.12}
\newcommand{\customlabel}[2]{%
   \protected@write \@auxout {}{\string \newlabel {#1}{{#2}{\thepage}{#2}{#1}{}} }%
   \hypertarget{#1}{#2}
}
\newtheorem{thm}{Theorem}
\newtheorem{cor}[thm]{Corollary}
\newtheorem{lem}[thm]{Lemma}
\newtheorem{prop}[thm]{Proposition}
\newtheorem{example}{Example}[section]
\theoremstyle{definition}
\newtheorem{defn}[thm]{\textbf{Definition}}
\theoremstyle{definition}
\theoremstyle{remark}
\newtheorem{rem}[thm]{Remark:}
\theoremstyle{noter}
\newcommand{\op}{\operatorname}
\newcommand {\CC} {\mathbb{C}}
\newcommand{\ben}{\begin{equation}}
\newcommand{\een}{\end{equation}}
\newcommand{\bena}{\begin{equation*}}
\newcommand{\eena}{\end{equation*}}
\newcommand{\ma}{\mathcal}
\newcommand{\To}{\longrightarrow}
\newcommand{\ZZ}{\mathbb{Z}}
\newcommand{\Tot}{\longmapsto}
\def\ZZ{\mathbb{Z}}
\title{Extending free group action on surfaces}
\author{Jes\'us Emilio Dom\'inguez\thanks{Universidad Autonoma de Sinaloa, M\'exico. {\em e-mail: }{\tt jedguez@gmail.com}}
and Carlos Segovia\thanks{Instituto de Matem\'aticas UNAM-Oaxaca,  M\'{e}xico. {\em e-mail: }{\tt csegovia@matem.unam.mx}}}
\begin{document}

\maketitle

\begin{abstract}
The present work introduces new perspectives in order to extend finite group actions from surfaces to 3-manifolds. We consider the Schur multiplier associated to a finite group $G$ in terms of principal $G$-bordisms 
in dimension two, called $G$-cobordisms. We are interested in the question of when
a free action of a finite group on a closed oriented surface extends to a non-necessarily free action on a 3-manifold. 
We show the answer to this question is affirmative for abelian, dihedral, symmetric and alternating groups.
As an application of our methods, we show that every non-necessarily free action of abelian groups (under certain conditions) and dihedral groups on a closed oriented surface extends to $3$-dimensional handlebody.
\end{abstract}

\section*{Introduction}
\label{intro}

Let $\Omega_n^{SO}(G)$ be the free $G$-bordism group in dimension $n$ from Conner-Floyd \cite{CF} and denote by 
$\Omega_{n+1}^{SO,\partial free }(G)$, the $G$-bordism group of $(n+1)$-dimensional manifolds with a non necessarily free $G$-action which restricts to a free action over the boundary.   
We are interested in knowing what the image of the following map is
\ben\label{e345}
\Omega_{n+1}^{SO,\partial free }(G)\longrightarrow\Omega_n^{SO}(G)\,.
\een
For $n=2$, the group $\Omega_2^{SO}(G)$ has been studied extensively with the name of the Schur multiplier \cite{SM} (denoted by $\ma{M}(G)$).
When this group vanishes, the map \eqref{e345} is surjective, such is the case for cyclic groups, groups of deficiency zero, see \cite{SM}. 
For free actions of abelian groups and dihedral groups, the extension was given by Reni-Zimmermann \cite{RZ} and Hidalgo \cite{Hida}. Obstructions for the surjectivity of the map \eqref{e345} are constructed by Samperton \cite{ES}, considering the quotient by the homology classes represented by tori.

Our approach considers the elements of the group $\Omega_2^{SO}(G)$ represented by what we call $G$-cobordisms in dimension two.
These are diffeomorphism classes of principal $G$-bundles over (closed) surfaces \cite{AC}. 
We say that a $G$-cobordism is {\it extendable} if it has a representative given by a principal $G$-bundle over a surface $S$, which is the boundary 
of a $3$-dimensional manifold $M$ with an action of $G$.

For $G$ a finite abelian group, we show that every $G$-cobordism over a closed surface is extendable, see Theorem \ref{extAbe}. For this, we decompose any $G$-cobordism into small pieces given by $G$-cobordisms over a closed surface of genus one, 
which are extendable, as we will see in Proposition \ref{prC1}. 

For the dihedral group $D_{2n}$, we focus in the case $n=2k$ since the Schur multiplier $\ma{M}(D_{2n})$ vanishes for $n=2k+1$. 
Similar to the abelian case, we decompose every $D_{2n}$-cobordism into a finite product of the generator with base space of genus one, which is induced by a reflection and the rotation by 180 degrees, see Corollary \ref{cordie}.

For the symmetric group $S_n$, the Schur multiplier is non-trivial for $n> 3$ and in that case it is equal to $\ZZ_2$. In Proposition \ref{propsym}, we prove that there is a generator with base space of genus one, which is induced by any two disjoint transpositions. A similar argument works for the alternating group $A_n$, where for $n=6,7$, we use the Sylow theory of the Schur multiplier that is shown in Proposition \ref{Sylow}.

In summary, we have the following result.

\begin{thm}\label{thm1}
For $G$ a finite abelian group or $G\in \{D_{2n},S_n,A_n\}$, every $G$-cobordism over a closed oriented surface is extendable.
\end{thm}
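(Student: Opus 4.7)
The plan is, for each of the four families of groups $G$, to decompose an arbitrary $G$-cobordism into a composition of genus-one $G$-cobordisms drawn from a small explicit list, and then to exhibit an equivariant $3$-manifold extension of each genus-one model. The first step I would carry out is to record the elementary but essential closure property that extendability is preserved under composition of $G$-cobordisms: if principal $G$-bundles $S_1, S_2$ bound $3$-manifolds $M_1, M_2$ with $G$-actions that are free on the relevant boundary components, then the composition $S_1 \cdot S_2$ along a shared free boundary component bounds the equivariant gluing $M_1 \cup M_2$. Once this is in place, the theorem is reduced in each case to extending a finite explicit list of model genus-one pieces.

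For $G$ finite abelian I would appeal directly to Theorem \ref{extAbe}. Its proof is powered by Proposition \ref{prC1}, which identifies each genus-one abelian $G$-cobordism with a commuting pair $(a,b) \in G \times G$ and constructs its extension by thickening the torus and attaching $2$-handles in a way dictated by the pair. An arbitrary $G$-cobordism is then cut along simple closed curves into such genus-one factors, and the closure property glues the extensions. The dihedral group $D_{2n}$ is treated by the same template via Corollary \ref{cordie}, where only the even subcase $n = 2k$ is nontrivial since $\ma{M}(D_{2n}) = 0$ for odd $n$; everything reduces to the single generator coming from a reflection together with the rotation of order two, which admits an explicit handlebody extension.

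For $G = S_n$ Proposition \ref{propsym} produces the required genus-one generator of $\ma{M}(S_n) \cong \ZZ_2$ (for $n > 3$) from any pair of disjoint transpositions, and its extension is again a concrete handlebody on which the two commuting involutions act compatibly. For $G = A_n$ with $n \neq 6, 7$ the argument proceeds identically, but the enlarged Schur multiplier for $n = 6, 7$ forces an extra step: I would invoke the Sylow decomposition of Proposition \ref{Sylow} to split $\ma{M}(A_n)$ into $p$-primary components, realize each component by an explicit extendable genus-one generator built from a commuting pair of even permutations, and then reassemble the extensions via the composition property recorded at the outset.

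The main obstacle I anticipate is not producing any single extension — those are hands-on geometric constructions — but ensuring that the decomposition of an arbitrary $G$-cobordism can always be arranged so that every factor lies in the finite model list, and that the equivariant collars agree well enough on the shared boundaries to produce a genuine $3$-manifold with $G$-action after gluing. The exceptional cases $A_6$ and $A_7$ concentrate both difficulties, since the Sylow reduction of $\ma{M}(A_n)$ must be made compatible with the cobordism decomposition and several distinct model generators must be extended and assembled simultaneously.
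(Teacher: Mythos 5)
Your list of ingredients matches the paper's (Theorem \ref{extAbe} for abelian groups, Corollaries \ref{cordie}, \ref{coraz2}, \ref{ccor} for the genus-one generators, Proposition \ref{Sylow} for $A_6,A_7$, Proposition \ref{prC1} for the genus-one extensions), but the reduction you build everything on --- decompose an arbitrary $G$-cobordism into a composition of genus-one models ``drawn from a small explicit list'' and glue extensions along shared boundary circles --- fails as stated in the nonabelian cases. Cutting a closed-surface $G$-cobordism along separating curves produces pieces whose boundary circles carry monodromies in $[G,G]$ (Proposition \ref{pro1}); these are trivial for abelian $G$, which is why the paper can close up the pieces there, but for $D_{2n}$, $S_n$, $A_n$ they are generally nontrivial, so the genus-one pieces are surfaces \emph{with boundary}, Proposition \ref{prC1} (a statement about closed genus-one cobordisms) does not apply to them, and their monodromy data is arbitrary rather than taken from a short list. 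The short list (the single generator $u$, etc.) only emerges after applying the relations \eqref{four1}--\eqref{four4}, i.e.\ after replacing the surface by a freely $G$-bordant one. Your closure lemma also does not typecheck: if $S_i$ bounds $M_i$ then $S_i$ is closed and there is no leftover boundary circle to compose along; to run the composition route literally you would need relative (corner) extensions of the bounded genus-one pieces together with matching equivariant membranes over the cut circles, which is exactly the compatibility problem you flag at the end and leave unresolved --- and it is essentially the harder non-free extension problem, not a technicality.

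The missing idea, which is what makes the paper's short proof of Theorem \ref{thm1} work, is that extendability is invariant under free $G$-bordism of closed surfaces: if $W$ is a $3$-dimensional free $G$-bordism from $S$ to $S'$ and $S'$ extends via $M'$, then $W\cup_{S'}M'$ extends $S$. With this in hand the argument is immediate: a $G$-cobordism whose class in $\ma{M}(G)\cong\Omega_2^{SO}(G)$ vanishes bounds a free action and is extendable; otherwise its class equals the generator, which by Corollaries \ref{cordie}, \ref{coraz2}, \ref{ccor} is represented on a closed surface of genus one and hence is extendable by Proposition \ref{prC1}; for $A_6$ and $A_7$ one invokes Proposition \ref{Sylow} (your variant, pushing genus-one generators of the Sylow subgroups' multipliers forward by the corestriction, also works, but again only through this bordism-invariance step). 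I would restate your ``closure property'' as this invariance and drop the claim of a geometric genus-one decomposition for the nonabelian groups. A minor point: the extension in Proposition \ref{prC1} is the product of a rotated disc with $P_k$, not a thickening-plus-$2$-handles construction, though this does not affect your citation of it.
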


We have the following applications for extending non-necessarily free actions over surfaces to 3-dimensional handlebodies:
\begin{itemize}
    \item[i)] In Theorem \ref{teore1}, the actions of abelian groups have two types of fixed points, which are the ones induced by hyperelliptic involutions and pairs of ramification points with complementary monodromies (signature $>2$).
    An unfolding process is performed by first considering the quotient by the hyperelliptic involutions and after some modifications, we reduce the problem to the extension of free actions. 
    
\item[ii)]  In Theorem \ref{teore2}, the actions of dihedral groups reduce to a finite product of an specific generator. We extend the action for this generator and for the surfaces realized by the products.
\end{itemize}
These results were proven before by different methods, by Reni-Zimmermann \cite{RZ} and Hidalgo \cite{Hida}.

This article is organized as follows. In Section \ref{sec0}, we review the concept of $G$-cobordism and the Schur multiplier, as well as the relations between them. In Section \ref{disi}, we give explicit generators for the Schur multiplier of the dihedral, the symmetric and the alternating groups. Finally, in Section \ref{nonfree} we construct the extensions of the free actions on closed oriented surfaces for the dihedral, symmetric and alternating groups. 
Additionally, for non necessarily free actions on closed oriented surfaces of abelian groups (under certain conditions) and dihedral groups, we construct the extensions given by 3-dimensional handlebodies.

{\bf Acknowledgements:}  the first author thanks the Academia Mexicana de Ciencias for the opportunity of participating in the Scientific Research Summer of 2020.
The second author is supported by c\'atedras CONACYT and Proyecto CONACYT ciencias b\'asicas 2016, No. 284621.
We would like to thank Bernardo Uribe and Eric Samperton for their helpful conversations.

\section{Preliminaries}
\label{sec0}

In this section we review in detail the definitions and properties of the theory of $G$-cobordisms introduced in \cite{AC,car}. In addition, we discuss some important facts about the Schur multiplier.

\subsection{$G$-cobordisms}
\label{sec1}

Throughout the article, $G$ denotes a finite group and $1\in G$ the neutral element. Also, we consider right actions of the group $G$, and all the surfaces are oriented.

\begin{defn}
Let $\Sigma$ and $\Sigma'$ be $d$-dimensional closed, oriented smooth ma\-nifolds. A cobordism between $\Sigma$ and $\Sigma'$ is a $(d+1)$-dimensional oriented smooth manifold $M$, with boundary diffeomorphic to $\Sigma\sqcup-\Sigma'$, where $-\Sigma'$ is $\Sigma'$ with the reverse orientation. Two cobordisms $M$ and $M'$ are equivalent if there exists a diffeomorphism $\phi:M\To M'$ such that we have the commutative diagram
\ben
\xymatrix{&M\ar[dd]^\phi&\\\Sigma\ar[ru]\ar[rd]&&\Sigma'\ar[lu]\ar[ld]\\&M'&\,.}
\een 
\end{defn}

\begin{defn}A {\it principal $G$-bundle} over a topological space $X$, consists of a fiber bundle $\pi:E\rightarrow X$ where the group $G$ acts freely and transitively over each fiber. 
\end{defn}

\begin{example}In dimension one, for every $g\in G$, we construct the principal $G$-bundle $P_g\rightarrow S^1$ obtained by attaching the ends of $[0,1]\times G$ via multiplication by $g$, i.e., $(0,h)$ is identified with $(1,gh)$ for every $h\in G$. 
This construction $P_g=[0,1]\times G/\sim_g$ projects to the circle by restriction to the first coordinate, and the action $P_g\times G\rightarrow P_g$ is defined by right multiplication on the second coordinate.
Any principal $G$-bundle over the circle is isomorphic to some $P_g$, and $P_g$ is isomorphic to $P_h$ if and only if $h$ is conjugate to $g$.  
\end{example}

Throughout the paper, we refer to the element $g\in G$ as the {\it monodromy} associated to the corresponding principal $G$-bundle $P_g$. In the case of the neutral element of the group $G$, we say that the monodromy is trivial.

\begin{defn}
Let $\xi:P\rightarrow \Sigma$ and $\xi':P'\rightarrow \Sigma'$ be principal $G$-bundles. 
A {\it $G$-cobordism} between $\xi$ and $\xi'$ is a principal $G$-bundle $\epsilon:Q\rightarrow M$, with diffeomorphisms for the boundaries $\partial M\cong S\sqcup- S'$ and $\partial Q\cong P\sqcup -P'$, which match with the projections and the restriction of the action. 
Two $G$-cobordisms $\epsilon:Q\rightarrow M$ and ${\epsilon}':Q'\rightarrow M'$ define the same class if $M$ and ${M}'$ are equivalent as cobordisms by a diffeomorphism $\phi:M\rightarrow {M}'$, $Q$ and $Q'$ are equivalent as cobordisms by a $G$-equivariant diffeomorphism $\psi:Q\To Q'$, and in addition, we have the commutative diagram  
\ben
\xymatrix{Q\ar[r]^\psi\ar[d]_\epsilon&Q'\ar[d]^{\epsilon'}\\M\ar[r]_\phi & M'\,.}
\een
\end{defn}

\begin{example}
\begin{enumerate}
\item A $G$-cobordism from $P_g$ to $P_h$ ($g,h\in G$) with base space the cylinder, is given by an element $k\in G$ such that $h=kgk^{-1}$. 

\item A $G$-cobordism with entry the disjoint union $P_g\sqcup P_h$ and exit $P_{gh}$, with base space the pair of pants, is a G-deformation retract\footnote{By a $G$-deformation retract we mean that the homotopy is by means of principal $G$-bundles.} of a principal G-bundle
over the wedge $S^1\vee S^1$.

\item There is only one $G$-cobordism over the disk and every representative is a trivial bundle.

\item Take as base space a two dimensional handlebody of genus $n$ with one boundary circle. A $G$-cobordism depends on elements $g_i,k_i\in G$, for $1\leq i\leq n$, with monodromy for the boundary circle given by the product $\prod_{i=1}^n[k_i,g_i]$. 

\end{enumerate}
\end{example}
In Figure \ref{fig1}, we have pictures for the $G$-cobordisms over the cylinder, the pair of pants and the disc.
For these pictures, we draw from left to right the direction for our cobordisms.
Also, every circle is labelled with the correspondent monodromy and for every cylinder we write inside the element of the group with which we do the conjugation.
\begin{figure}
    \centering
    \includegraphics[scale=1.2]{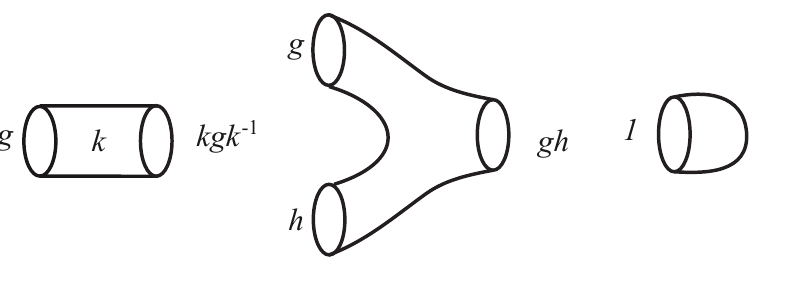}
    \caption{$G$-cobordism over the cylinder, the pair of pants and the disc.}
    \label{fig1}
\end{figure}

In the left side of Figure \ref{fig4}, we have a $G$-cobordism over a genus one handlebody. Additionally, in the right side, we represent an equivalent manner to see this $G$-cobordism.
\begin{figure}
    \centering
    \includegraphics{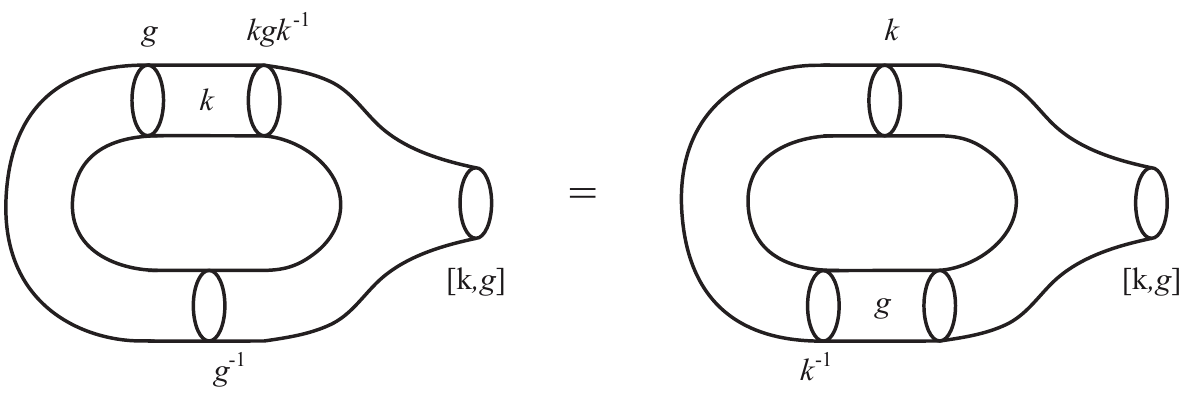}
    \caption{Two equivalent $G$-cobordisms over a handle of genus one.}
    \label{fig4}
\end{figure}

If a $G$-cobordism over a closed connected surface is cut along a simple closed se\-pa\-rating curve\footnote{A simple closed curve in a surface is separating if the cut surface is not connected.}, the monodromy of the resulting curve lies inside the commutator group, as shown in the following proposition. 

\begin{prop}\label{pro1}
For a $G$-cobordism over a closed connected surface $S$, the monodromy of every embedded simple closed separating curve in $S$ lies in the commutator group $[G,G]$.
\end{prop}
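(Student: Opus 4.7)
The plan is to reduce the statement directly to case (4) of the preceding Example. Since $\gamma \subset S$ is a simple closed separating curve, cutting $S$ along $\gamma$ yields two compact connected oriented surfaces $S_1$ and $S_2$, each of which has $\gamma$ as its unique boundary component. Up to diffeomorphism, each $S_i$ is a 2-dimensional handlebody of some genus $n_i \geq 0$ with one boundary circle, and $n_1 + n_2$ equals the genus of $S$.

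First I would restrict the given principal $G$-bundle $P \to S$ to the subsurface $S_1$. This produces a $G$-cobordism whose base is a genus-$n_1$ handlebody with one boundary circle $\gamma$, and whose boundary along $\gamma$ is the principal $G$-bundle $P_g \to \gamma$ whose monodromy $g$ we wish to analyze. By case (4) of the preceding Example, this restricted $G$-cobordism is classified by elements $g_i, k_i \in G$ for $1 \leq i \leq n_1$, and the monodromy along the single boundary circle is
\[
g = \prod_{i=1}^{n_1} [k_i, g_i],
\]
which clearly lies in $[G,G]$. Carrying out the same argument on $S_2$ gives an analogous expression for $g^{-1}$ (accounting for the reversed induced orientation of $\gamma$), consistent with $[G,G]$ being closed under inversion.

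The only substantive input behind case (4) is the standard presentation of the fundamental group of a genus-$n_i$ surface with one boundary component as a free group on generators $a_1, b_1, \ldots, a_{n_i}, b_{n_i}$ with the oriented boundary class represented by $\prod_{j=1}^{n_i}[a_j, b_j]$. Applying the monodromy homomorphism $\pi_1(S_i) \to G$ sends this product of commutators to a product of commutators in $G$, which gives the claimed conclusion by functoriality. I would, if needed, briefly recall this presentation to make the reduction to Example (4) fully explicit.

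The argument is essentially a bookkeeping exercise once the classification of $G$-cobordisms over a handlebody with one boundary circle is in hand; the main (and essentially only) point requiring care is to ensure compatible orientations when identifying $\gamma$ as the boundary of $S_1$ versus as the boundary of $S_2$, but this is harmless since $[G,G]$ is a subgroup.
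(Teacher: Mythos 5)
Your argument is correct and is exactly the justification the paper intends: the proposition is stated immediately after Example 1.5(4), and your reduction (cut along the separating curve, restrict the bundle to either genus-$n_i$ piece with one boundary circle, and read off the boundary monodromy as $\prod_{i}[k_i,g_i]$ via the standard presentation of the fundamental group of a surface with boundary) is precisely that route. The orientation/conjugacy caveats you mention are harmless, since $[G,G]$ is a normal subgroup closed under inversion.
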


\begin{defn}
A $G$-cobordism of dimension two, over a closed surface, is {\it extendable} if for some representative principal $G$-bundle $P\rightarrow S$, with the action $\alpha:P\times G\rightarrow P$, there exits a $3$-dimensional manifold $M$ with boundary $\partial M=P$, with an action of $G$ of the form $\overline{\alpha}:M\times G\rightarrow M$, which extends $\alpha$, i.e., we have the commutative diagram 
\ben
\xymatrix{P\times G\ar@{_(->}[d]\ar[r]^\alpha & P\ar@{_(->}[d] \\  M\times G\ar[r]_{\overline{\alpha}} & M\,.}
\een
\end{defn}


\begin{prop}\label{prC1}
Any $G$-cobordism over a closed surface of genus one is extendable.
\end{prop}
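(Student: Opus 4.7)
A $G$-cobordism over a closed surface of genus one is (up to equivalence) a principal $G$-bundle $P \to T^2$, and such bundles are classified by the monodromies $g, h \in G$ around a meridian and a longitude; these necessarily commute since $\pi_1(T^2) = \ZZ^2$. My plan is to produce an extending $3$-manifold $M$ explicitly as a branched cover of the solid torus $D^2 \times S^1$, with branching tuned to $g$.

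Set $n = |g|$ and define
\[
M := \bigl(D^2 \times \RR \times G\bigr) / (\ZZ/n \times \ZZ),
\]
where $(k, \ell) \in \ZZ/n \times \ZZ$ acts by $(k,\ell)\cdot(z, t, x) = \bigl(e^{2\pi i k/n} z,\ t + \ell,\ g^k h^\ell x\bigr)$. Since $gh = hg$, these operators commute and the action is well-defined. I would first verify freeness: if $\ell \neq 0$ the $t$-coordinate moves, while if $\ell = 0$ with $0 < k < n$ then $g^k \neq e$, so left multiplication shifts the $G$-coordinate. Hence $M$ is a smooth $3$-manifold with boundary, and right multiplication on the $G$-factor descends to a (not-necessarily-free) $G$-action on $M$ whose stabilizers along the image of $\{0\} \times \RR \times G$ are conjugate to $\langle g \rangle$.

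It remains to show $\partial M \cong P$ as $G$-spaces. Writing $\partial M = (S^1 \times \RR \times G)/(\ZZ/n \times \ZZ)$ and passing to the universal cover $\RR^2 \times G$, the deck group becomes $H = \langle (1/n, 0, g),\, (0, 1, h)\rangle$; the rescaling $(x, y, z)\mapsto (nx, y, z)$ is a $G$-equivariant diffeomorphism carrying $H$ onto $\langle (1, 0, g),\, (0, 1, h)\rangle \cong \ZZ^2$, which is precisely the deck group defining the principal $G$-bundle with monodromies $(g, h)$. The main point of care is this rescaling bookkeeping in the boundary identification; the construction itself is essentially forced by viewing $M/G$ as the topological solid torus $(D^2/(\ZZ/n)) \times S^1$ with a cyclic singularity along its core.
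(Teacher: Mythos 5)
Your construction is correct and is essentially the paper's own argument: fill the disc along the monodromy $g$ with the rotation by $2\pi/|g|$ fixing the centre, and carry the commuting monodromy $h$ around the remaining circle direction, so that $M=(D^2\times\RR\times G)/(\ZZ/n\times\ZZ)$ is just an explicit global-quotient (branched cover of the solid torus) formulation of the paper's ``cone off $P_g$ and take the product with $P_k$'' picture. A small advantage of your phrasing is that the freeness and boundary identifications are checked uniformly, with no need for the paper's reduction to a connected total space, but the underlying idea is the same.
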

\begin{proof}Consider a principal $G$-bundle representing the given $G$-cobordism over the closed surface of genus one. 
It is enough to prove the case in which the total space of the bundle is a connected space. Moreover, the action of $G$ over the total space can be modified by an isotopy resulting in an action which depends completely on a pair of monodromies $(g,k)$, which are associated to two curves in the torus that intersect once. Denote by $P_g$ and $P_k$ the two principal $G$-bundles associated to these two curves.
It follows that the action of $G$ over the total space is given by the product of the total spaces $P_g$ and $P_k$.
Because of the assumptions, at least one of $P_g$ or $P_k$ is a connected space, let us assume that it is $P_g$.
The extension of the action of $G$ is through the 3-dimensional handlebody constructed as follows. First,  
consider the disc $D$ as the union $(S^1\times (0,1])\cup \{0\}$, where $0$ is the center. For each circle $S^1\times \{r\}$, with $r\in (0,1]$, we take as monodromy the element $g$ so that the principal $G$-bundle is $P_g$, and we take the center $0$ as fixed point. Thus, over the disc we have the rotation by $2\pi/|g|$, with $|g|$ the order of $g\in G$. Taking the product of this disc together with the induced principal $G$-bundle $P_k$, we obtain the extension which makes the $G$-cobordism extendable. 
\end{proof}

\begin{rem}
We want to emphasize why the construction given in the previous proposition does not work for closed surfaces with genus $>1$.
The reason is that the set of fixed points should be a smooth submanifold, which we can not assure for genus $>1$, since we have points where three lines meet. 
\end{rem}

Now, we apply the previous results to abelian groups.

\begin{thm}\label{extAbe}
For $G$ a finite abelian group, any $G$-cobordism is extendable.
\end{thm}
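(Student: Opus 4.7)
The plan is to prove the theorem by induction on the genus $n$ of the closed base surface $S$, using Proposition \ref{prC1} as the crucial base step. For $n=0$ the base surface is $S^2$; since $\pi_1(S^2)=1$ every principal $G$-bundle is trivial, and $S^2\times G$ bounds $D^3\times G$ with the action on the second factor. The case $n=1$ is exactly Proposition \ref{prC1}.

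For the inductive step with $n\geq 2$, I would pick a simple closed separating curve $c\subset S$ that decomposes $S$ into a genus-one surface $S_1$ and a genus-$(n-1)$ surface $S_2$, each with a single boundary circle identified with $c$. By Proposition \ref{pro1}, the monodromy of the $G$-cobordism along $c$ lies in $[G,G]$, which is trivial because $G$ is abelian. Hence the restriction of $P\to S$ to $c$ is the trivial bundle $c\times G$, and I can cap each piece off with a disc to obtain closed surfaces $\hat S_1$ (genus one) and $\hat S_2$ (genus $n-1$), extending the principal $G$-bundle trivially over each capping disc $D_i$. By Proposition \ref{prC1} applied to the genus-one piece and the inductive hypothesis applied to the genus-$(n-1)$ piece, the resulting $G$-cobordisms extend to $3$-manifolds $M_1$ and $M_2$ with $G$-actions.

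To recover an extension of the original bundle, I would glue $M_1$ and $M_2$ along the trivial disc bundles $D_1\times G\subset\partial M_1$ and $D_2\times G\subset\partial M_2$ via the $G$-equivariant identification $(x,g)\mapsto(\sigma(x),g)$, where $\sigma:D_1\to D_2$ is an orientation-reversing diffeomorphism matching the boundary circles (both canonically identified with $c$). After smoothing the corner along $c\times G$, the resulting $3$-manifold $M=M_1\cup_{D\times G}M_2$ has boundary $P_1\cup_c P_2=P$ and carries a $G$-action extending those on $M_1$ and $M_2$, hence extends the original action on $P$.

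The main technical point to verify is the gluing step: one needs the $G$-equivariant collars of $D_1\times G$ and $D_2\times G$ inside $M_1$ and $M_2$ to be isomorphic as equivariant collars so the union inherits a smooth $G$-action. This is straightforward because each collar is $G$-equivariantly modelled on $D\times G\times[0,1)$ with $G$ acting by right translation on the second factor and trivially elsewhere, and equivariant tubular-neighborhood uniqueness supplies compatible collar structures. A small preliminary point is that a separating curve with the required topological effect exists for every $n\geq 2$ by standard surface topology, so the induction proceeds without further obstruction.
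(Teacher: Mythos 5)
Your proof is correct and follows essentially the same route as the paper: you use Proposition \ref{pro1} (trivial monodromy along separating curves, since $[G,G]=1$ for abelian $G$) to split the surface into genus-one pieces and Proposition \ref{prC1} to extend each piece. The only real difference is in the reassembly, where the paper notes that the connected sum is bordant to the disjoint union of the genus-one pieces, while you perform an explicit $G$-equivariant gluing of the extending $3$-manifolds along the trivialized disc bundles $D_i\times G$ --- which amounts to the same thing, made more explicit.
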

\begin{proof}
Consider a $G$-cobordism over a connected closed surface.
By Proposition \ref{pro1}, we can write this $G$-cobordism as a connected sum of $G$-cobordisms with base space of genus one. 
This connected sum is done for the total space along trivial bundles over a circle. Since the connected sum is in the same bordism class as the disjoint union, we are decomposing this $G$-cobordism as a disjoint union of $G$-cobordisms with base space a closed surface of genus one. Because of Proposition \ref{prC1}, any $G$-cobordism over a closed surface of genus one is extendable, so the theorem follows.
\end{proof}

\subsection{The Schur multiplier}
\label{secschur}

The study of this theory began in 1904 by Isaai Schur in order to study the projective representations of groups. Nowadays, the Schur multiplier represents three different isomorphic groups given by the second free bordism group $\Omega_2^{SO}(G)$, the second homology group $H_2(G,\ZZ)$ and the second cohomology group $H^2(G,\CC^*)$. 

\begin{defn}
 Let $\langle G, G\rangle$ be the free group on all pairs $\langle x, y\rangle$, with $x,y\in G$. There is a natural homomorphism of $\langle G, G\rangle$ onto the commutator group $[G, G]$, which sends $\langle x, y\rangle$ into $[x, y]$. Consider the kernel $Z(G)$ of this homomorphism and the normal subgroup $B(G)$ of $\langle G, G\rangle$ generated by the relations
\begin{eqnarray}\label{four1}
		&\left<x,x\right>&\sim 1\,,\\\label{four2}
	&\left<x,y\right>&\sim \left<y,x\right>^{-1}\,,\\\label{four3}
	&\left<xy,z\right>&\sim \left<y,z\right>^x\left<x,z\right>\,,\\\label{four4}
	&\left<y,z\right>^x&\sim \left<x,[y,z]\right>\left<y,z\right>\,,
\end{eqnarray}
where $x,y,z\in G$ and $\langle y,z\rangle^x=\langle y^x,z^x\rangle = \langle xyx^{-1}, xzx^{-1}\rangle$. The Schur multiplier is defined as the quotient group 
\ben
\ma{M}(G):=\frac{Z(G)}{B(G)}\,.
\een
\end{defn}

Miller \cite{Mil} shows that the quotient $Z(G)/B(G)$ is canonically isomorphic to the Hopf's integral formula 
\ben\label{hopf}
\ma{M}(G)\cong\frac{R\cap [F,F]}{[F,R]}\,,
\een
where $G=\langle \,F\,|\,R\,\rangle$. Moreover, in \cite{Mil} there are 
some consequent relations, which we enumerate in the following theorem.
\begin{thm}[\cite{Mil}]
The following relations can be deduced from \eqref{four1}-\eqref{four4}:
 \begin{eqnarray}\label{four5}
 &\left<x,yz\right>&\sim \left<x,y\right>\left<x,z\right>^y\,,\\\label{four6}
 &\left<x,y\right>^{\left<a,b\right>}&\sim \left<x,y\right>^{[a,b]}\,,\\\label{four7}
 & \left[\left<x,y\right>,\left<a,b\right>\right]&\sim \left<[x,y],[a,b]\right>\,,\\\label{four8}
 & \left<b,b'\right>\left<a_0,b_0\right> &\sim \left<[b,b'],a_0\right>\left<a_0,[b,b']b_0\right>\left<b,b'\right>\,,\\\label{four9}
 & \left<b,b'\right>\left<a_0,b_0\right> &\sim \left<[b,b']b_0,a_0\right>\left<a_0,[b,b']\right>\left<b,b'\right>\,,\\\label{four10}
 & \left<b,b'\right>\left<a,a'\right> &\sim \left<[b,b'],[a,a']\right>\left<a,a'\right>\left<b,b'\right>\,,\\\label{four11}
 &\left<x^n,x^s\right> &\sim 1\hspace{1cm}n=0,\pm1,\cdots;s=0,\pm1,\cdots\,,
 \end{eqnarray}for $x,y,z,a,b,a',b',a_0,b_0\in G$.
\end{thm}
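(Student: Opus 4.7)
Throughout, write $\sim$ for congruence modulo $B(G)$, adopt the commutator convention $[u, v] = uvu^{-1}v^{-1}$ (so the universal identity $uv = [u, v]\,vu$ holds), and note that specialising (four3) to a trivial argument together with (four2) gives the auxiliary relations $\langle 1, z\rangle \sim 1 \sim \langle z, 1\rangle$.

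The easiest relations (four5) and (four11) follow directly from the axioms. For (four5), I would start from $\langle x, yz\rangle \sim \langle yz, x\rangle^{-1}$ via (four2), expand $\langle yz, x\rangle \sim \langle z, x\rangle^y \langle y, x\rangle$ using (four3), invert, and reapply (four2) inside each factor to produce $\langle x, y\rangle\langle x, z\rangle^y$. For (four11), I would induct on $|n| + |s|$: the step peels off a factor of $x$ from $x^n$ or $x^s$ via (four3) or (four5) and clears the resulting symbolic conjugation by $x$ via (four4), reducing to (four1) and the trivial cases. Negative exponents are brought into the induction using $1 \sim \langle x, 1\rangle \sim \langle x, x\cdot x^{-1}\rangle \sim \langle x, x\rangle\,\langle x, x^{-1}\rangle^x \sim \langle x, x^{-1}\rangle^x$, which together with (four4) forces $\langle x, x^{-1}\rangle \sim 1$.

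The technical core is (four7), from which the remaining identities follow quickly. First I would observe that (four6) and (four7) are equivalent modulo (four2) and (four4). Writing $\langle x, y\rangle^{\langle a, b\rangle} = \langle a, b\rangle\,\langle x, y\rangle\,\langle a, b\rangle^{-1} = [\langle a, b\rangle, \langle x, y\rangle]\,\langle x, y\rangle$ and applying (four7) (with (four2) to flip the arguments of the commutator) converts the right factor to $\langle [a, b], [x, y]\rangle\,\langle x, y\rangle$, which by (four4) is $\langle x, y\rangle^{[a, b]}$; the reverse implication is identical. To prove (four7) itself, the plan is to expand the single generator $\langle [x, y], [a, b]\rangle$ symbolically, using (four3) and (four5) to split each commutator into its four constituent letters, and applying (four4) whenever a spurious symbolic conjugation appears. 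After the telescoping cancellations one arrives at $\langle x, y\rangle\,\langle a, b\rangle\,\langle y, x\rangle\,\langle b, a\rangle$, which by (four2) is exactly $[\langle x, y\rangle, \langle a, b\rangle]$. Carrying out this bookkeeping is the main technical hurdle, and the one place in which all four axioms interact nontrivially.

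Granted (four7), the rest is mechanical: (four10) is an immediate consequence of (four7) and the universal identity $uv = [u, v]\,vu$ applied with $u = \langle b, b'\rangle$ and $v = \langle a, a'\rangle$. For (four8), I would expand $\langle a_0, [b, b']\,b_0\rangle$ through (four5) as $\langle a_0, [b, b']\rangle\,\langle a_0, b_0\rangle^{[b, b']}$, invert the first factor by (four2) so that it cancels the leading $\langle [b, b'], a_0\rangle$ on the right-hand side, clear the remaining symbolic conjugation through (four4), and reassemble using (four10) to land on the left-hand side $\langle b, b'\rangle\,\langle a_0, b_0\rangle$. Relation (four9) is obtained by the same strategy, this time expanding $\langle [b, b']\,b_0, a_0\rangle$ via (four3) rather than (four5).
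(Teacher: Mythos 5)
The paper itself gives no argument for this theorem: it is quoted from Miller \cite{Mil}, so your proposal can only be judged on its own terms. Most of your reductions are sound and I could check them directly: the derivation of \eqref{four5} from \eqref{four2} and \eqref{four3}, the induction for \eqref{four11} (including $\langle x,x^{-1}\rangle\sim 1$ and $\langle 1,z\rangle\sim 1$), the equivalence of \eqref{four6} and \eqref{four7} via \eqref{four2} and \eqref{four4}, and the deductions of \eqref{four10} and \eqref{four8} from \eqref{four7} are all correct. The genuine gap is at \eqref{four7} itself, which is the one relation everything else in your argument hangs on. There your text is a plan, not a proof: ``expand $\langle [x,y],[a,b]\rangle$ by \eqref{four3} and \eqref{four5}, clear symbolic conjugations by \eqref{four4}, and the telescoping cancellations give $\langle x,y\rangle\langle a,b\rangle\langle y,x\rangle\langle b,a\rangle$.'' Each application of \eqref{four4} introduces a fresh generator $\langle w,[u,v]\rangle$ whose first entry is a conjugator (a partial product of the letters $x,y,a,b$), and it is not at all visible that these cancel; no intermediate identity is stated that would make them do so. The known derivations (Miller's own, and the equivalent computation for the non-abelian exterior square) do not proceed this way: they obtain the commutation rule by expanding a mixed symbol such as $\langle uu',vv'\rangle$ in two different orders using \eqref{four3} and \eqref{four5} and comparing the results, or through a chain of auxiliary lemmas. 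Some device of this kind is needed, and as written the crux of the theorem is missing.

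A second, smaller point: your claim that \eqref{four9} follows ``by the same strategy'' does not close up. Carrying out your own recipe --- expand $\langle [b,b']b_0,a_0\rangle$ by \eqref{four3}, clear the conjugation by \eqref{four4}, cancel the adjacent pair $\langle [b,b'],a_0\rangle\langle a_0,[b,b']\rangle$ by \eqref{four2}, then apply \eqref{four10} --- lands on $\langle b,b'\rangle\langle b_0,a_0\rangle$, not on the stated left-hand side $\langle b,b'\rangle\langle a_0,b_0\rangle$. In fact, with the conventions the paper fixes (where $y^x=xyx^{-1}$, which together with \eqref{four3} forces $[u,v]=uvu^{-1}v^{-1}$), the two sides of \eqref{four9} as printed have different images under the projection $\langle G,G\rangle\to[G,G]$, namely $[b,b'][a_0,b_0]$ versus $[b,b'][b_0,a_0]$, so \eqref{four9} as stated cannot be a consequence of \eqref{four1}--\eqref{four4}; it is evidently a transcription slip (the relation becomes correct, and your strategy proves it, with $\langle b_0,a_0\rangle$ on the left). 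You should either derive the corrected form and flag the discrepancy, or not assert that the stated relation follows.
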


The connection with bordism relates the elements of $Z(G)$ by means of the assignment 
\ben\label{corrbor}
\langle x_1,y_1 \rangle \langle x_2,y_2\rangle \cdots\langle x_n,y_n \rangle \longmapsto (y_n,x_n)(y_{n-1},x_{n-1})\cdots (y_1,x_1)\,,
\een
where the sequence in the right defines the generating monodromies for a $G$-cobordism over a closed surface of genus $n$ as in Figure  \ref{fig3}. 
\begin{figure}[h!]
    \centering
    \includegraphics[scale=0.8]{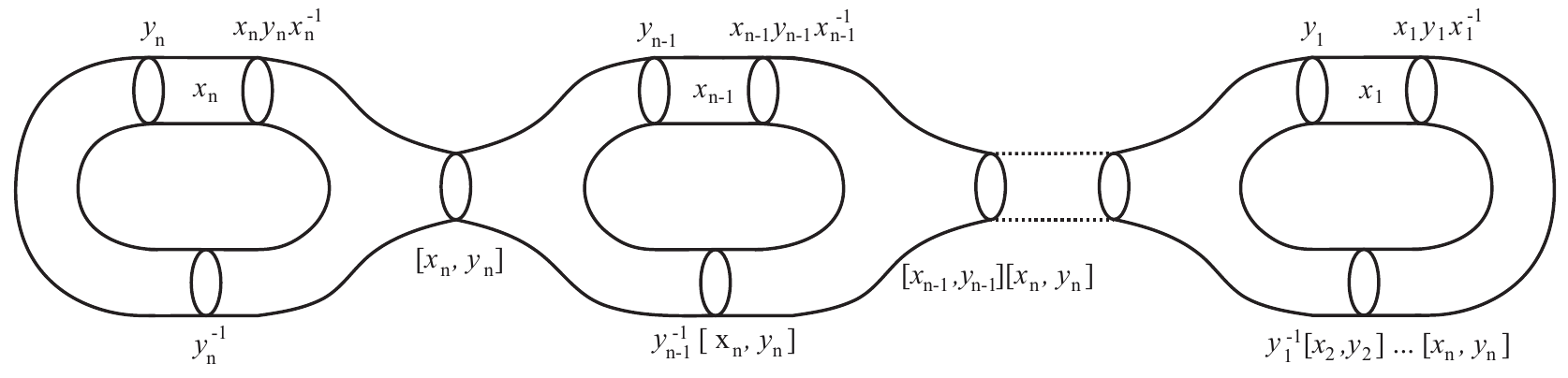}
    \caption{The $G$-cobordism associated to the sequence $(y_n,x_n)(y_{n-1},x_{n-1})\cdots (y_1,x_1)$, with $y_i,x_i\in G$ and $\prod_{i=1}^n [x_i,y_i]=1$.}
    \label{fig3}
\end{figure}

Indeed, the previous four relations \eqref{four1}, \eqref{four2}, \eqref{four3} and \eqref{four4}, are interpreted in bordism as follows:
\begin{itemize}
\item[(i)] For \eqref{four1} and \eqref{four2}, we consider the $G$-cobordism defined by the pairs $(x,x)$ and $(x,y)(y,x)$, respectively.
We represent these $G$-cobordisms in the left side of Figure \ref{fig80}, respectively. 
\begin{figure}
          \centering 
             \includegraphics[scale=0.85]{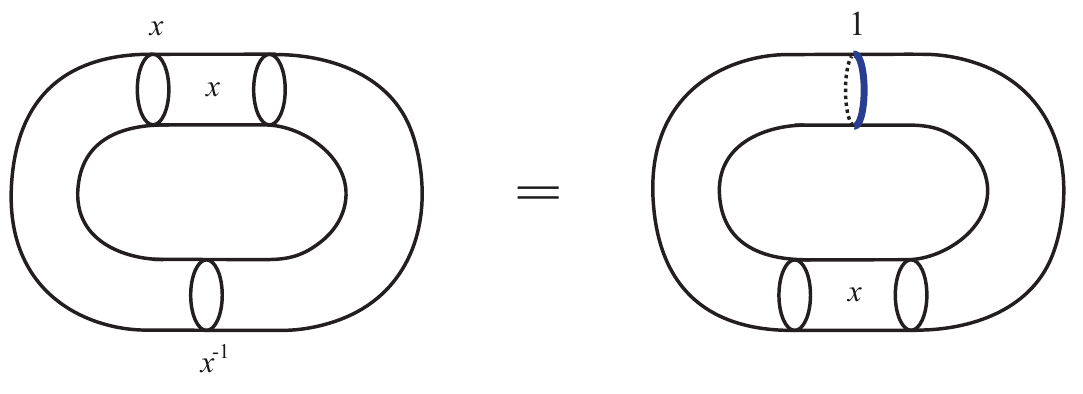}
          \includegraphics[scale=0.8]{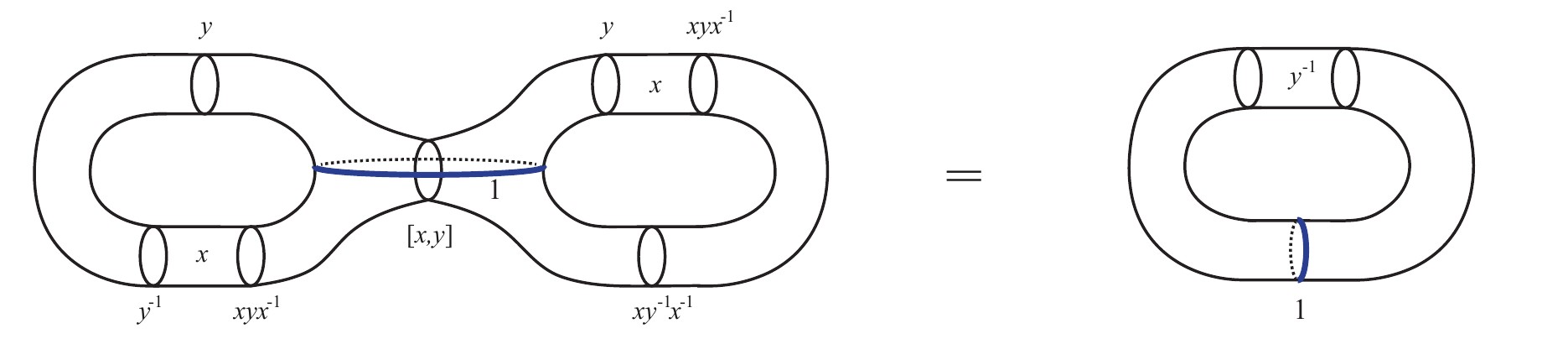}
         \caption{The $G$-cobordisms associated to the pairs $(x,x)$ and $(x,y)(y,x)$.}
         \label{fig80}
        \end{figure}
In the top of Figure \ref{fig80}, we apply the Dehn twist diffeomorphism and obtain that the conjugation becomes the neutral element $1\in G$. In the bottom of Figure \ref{fig80}, we cut along a trivial monodromy to reduce the genus by one. Notice that the $G$-cobordisms in the left side of Figure \ref{fig80} are null bordant since we can cut along a trivial monodromy eliminating the hole of the handle. 

\item[(ii)] For \eqref{four3} and \eqref{four4}, we obtain a $G$-cobordism, over a handlebody of genus two, where we can find 
a curve with trivial monodromy which reduces the genus by one. In Figure \ref{fig8}, we represent these identifications, respectively.
        \begin{figure}
          \centering 
          \includegraphics[scale=0.8]{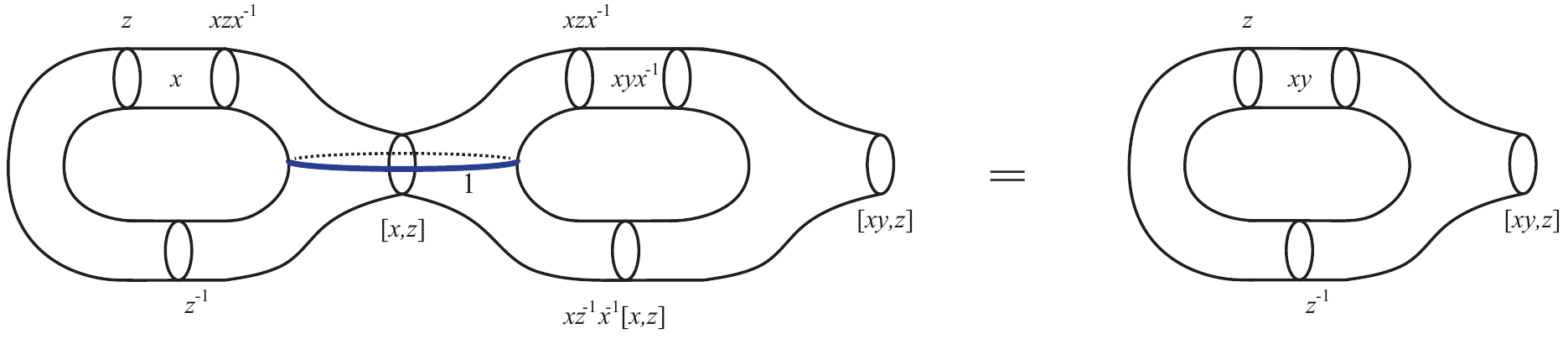}
          \includegraphics[scale=0.8]{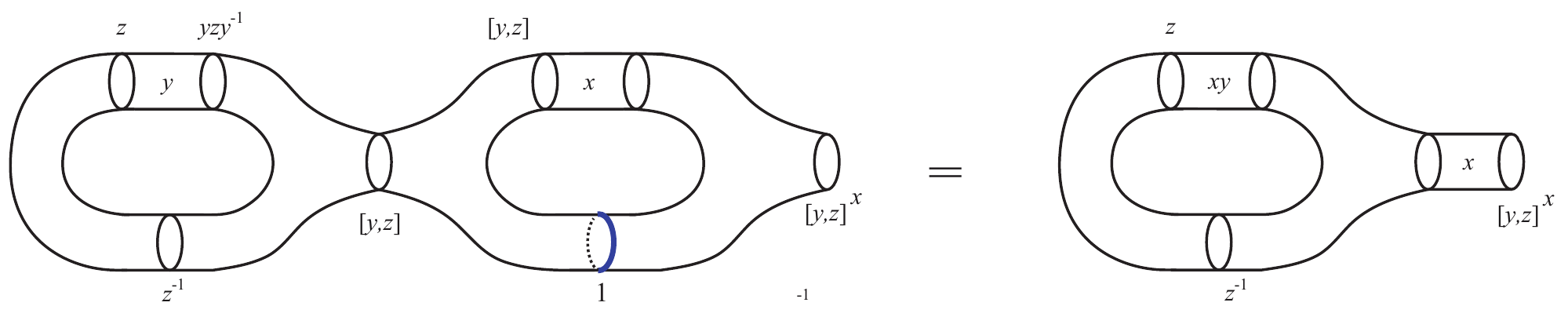}
         \caption{Reduction of genus through the cutting along a trivial monodromy.}
         \label{fig8}
        \end{figure}
\end{itemize}

Now, we focus on the Sylow theory of the Schur multiplier. We use Definition \ref{def1} and Theorem \ref{kar}, in order to show Proposition \ref{Sylow}.

\begin{defn}\label{def1}
For a subgroup $H\subset G$, there are the following induces maps:
\begin{itemize}
    \item[i)] the restriction map, denoted by $\op{res}:\ma{M}(G)\rightarrow \ma{M}(H)$, which associates to a $G$-cobordism over a closed surface, the restriction of the action to the subgroup $H$.
    \item[ii)] the corestriction map, denoted by $\op{cor}:\ma{M}(H)\rightarrow \ma{M}(G)$, which starts with a principal $H$-bundle $P\rightarrow S$ and associates the Borel construction 
    $P\times_H G$ produced by the quotient of the product $P\times G$ with the action of $H$ of the form $(x,g)h=(xh,h^{-1}g)$. The group $G$ has a free action over the  the Borel construction by $[x,g]\hat{g}=[x,g\hat{g}]$.
\end{itemize}
\end{defn}
In general, these maps extend to non-necessarily free actions, in particular, for the $G$-bordism groups $\Omega_{3}^{SO,\partial free}(G)$ of $3$-dimensional manifolds with a non necessarily free $G$-action which restricts to a free action over the boundary. 

\begin{thm}[\cite{SM}]\label{kar} Let $P$ be a Sylow $p$-subgroup of $G$ and let $\ma{M}(G)$ the $p$-component of the Schur multiplier $\ma{M}(G)$. Then the restriction map $\op{res}:\ma{M}(G)\rightarrow \ma{M}(P)$ induces an injective homomorphism $\ma{M}(G)_p\rightarrow \ma{M}(P)$,
and the corestriction map $\op{cor}:\ma{M}(P)\rightarrow \ma{M}(G)$ induces a surjective homomorphism $\ma{M}(P)\rightarrow \ma{M}(G)_p$.
\end{thm}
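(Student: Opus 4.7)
The plan is to derive both statements from the single classical identity
\[
\op{cor}\circ \op{res} \;=\; [G:P]\cdot \op{id}_{\ma{M}(G)},
\]
whose validity here can be read off either from the isomorphism $\ma{M}(G)\cong H_2(G,\ZZ)$ together with the standard transfer formula in group homology, or directly by inspecting the Borel construction in Definition \ref{def1}: restricting a principal $G$-bundle $\pi:Q\to S$ to $P$ and then inducing back up to $G$ produces a $G$-bundle $Q\times_P G$ that is $G$-bordant to $[G:P]$ disjoint copies of $\pi$.

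Once this identity is in hand, two elementary facts carry the rest of the argument. First, because $P$ is a finite $p$-group, its Schur multiplier $\ma{M}(P)$ is annihilated by $|P|$ and hence is itself a finite abelian $p$-group; in particular multiplication by any integer coprime to $p$ is an automorphism of $\ma{M}(P)$. Second, by the very definition of Sylow $p$-subgroup, $[G:P]$ is coprime to $p$, so multiplication by $[G:P]$ is an automorphism of the $p$-primary component $\ma{M}(G)_p$.

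Both conclusions then follow immediately. For injectivity of $\op{res}$ restricted to $\ma{M}(G)_p$: any $x\in\ma{M}(G)_p$ with $\op{res}(x)=0$ satisfies $[G:P]\cdot x=\op{cor}(\op{res}(x))=0$, and invertibility of multiplication by $[G:P]$ on $\ma{M}(G)_p$ forces $x=0$. For surjectivity of $\op{cor}$ onto $\ma{M}(G)_p$: given $y\in\ma{M}(G)_p$, choose an integer $m$ with $m\cdot [G:P]\equiv 1$ modulo the exponent of $\ma{M}(G)_p$, so that $y=\op{cor}(m\cdot \op{res}(y))$; moreover the image of $\op{cor}$ lies automatically inside $\ma{M}(G)_p$, since its source $\ma{M}(P)$ is a $p$-group.

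The main obstacle is justifying the composition formula in the geometric language of $G$-cobordisms, because the definitions of $\op{res}$ and $\op{cor}$ in Definition \ref{def1} involve a change of base space (the $H$-base $Q/H$ is a $[G:H]$-fold cover of the $G$-base $S$). The cleanest route is to appeal to the homological interpretation and cite the classical transfer identity for $H_*(G,\ZZ)$, which transports unchanged to the Schur multiplier via the Hopf isomorphism \eqref{hopf}.
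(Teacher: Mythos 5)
Your proposal is correct, but there is nothing in the paper to compare it against line by line: Theorem \ref{kar} is quoted from the reference [SM] (Karpilovsky) without proof, and the paper only uses it, in Proposition \ref{Sylow}, precisely through the identity you take as your starting point, namely that $\op{cor}\circ\op{res}$ acts on $\ma{M}(G)_p$ as multiplication by the index $m=[G:P]$ and is therefore an automorphism there. Your argument is the standard transfer proof and all the auxiliary facts you invoke are right: $\ma{M}(P)\cong H_2(P;\ZZ)$ is a finite abelian $p$-group (it is annihilated by $|P|$), the image of $\op{cor}$ lands in $\ma{M}(G)_p$ because homomorphic images of $p$-torsion are $p$-torsion, and injectivity/surjectivity then follow formally from invertibility of multiplication by $[G:P]$ on the $p$-component. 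The one step that genuinely needs care is the composition formula for the \emph{geometric} maps of Definition \ref{def1}, since $\op{res}$ changes the base surface; your sketch handles this correctly, because $\op{cor}(\op{res}(Q\to S))$ is exactly the pullback of $Q\to S$ along the $[G:P]$-sheeted covering $Q/P\to S$ (the map $Q\times_P G\to c^*Q$, $[q,g]\mapsto (qP,qg)$, is an isomorphism of principal $G$-bundles over $Q/P$), and since $\Omega_1^{SO}=\Omega_2^{SO}=0$ gives $\tilde\Omega_2^{SO}(BG)\cong H_2(BG;\ZZ)\cong\ma{M}(G)$, a bundle whose classifying map factors through a degree-$[G:P]$ covering of $S$ is bordant to $[G:P]$ disjoint copies of $Q\to S$; this also confirms that the bordism-level $\op{res}$ and $\op{cor}$ agree with the homological restriction and transfer under the Hopf isomorphism \eqref{hopf}, which is the compatibility you flag in your last paragraph. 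So your route is the same one implicitly underlying the paper's use of the theorem; what it buys, beyond the citation the authors give, is a self-contained justification in the $G$-cobordism language of the formula $\op{cor}\circ\op{res}=[G:P]\cdot\op{id}$ that Proposition \ref{Sylow} relies on.
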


\begin{prop}\label{Sylow}
For a finite group $G$, and $\op{Syl}(G)$ the set of isomorphism classes of Sylow subgroups of $G$, if any element $Q$ in $\op{Syl}(G)$ satisfies that any $Q$-cobordism is extendable, then any $G$-cobordism is extendable.
\end{prop}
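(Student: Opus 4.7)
The plan is to combine the Sylow decomposition of the Schur multiplier from Theorem \ref{kar} with the observation that extendability behaves well with respect to bordism and corestriction. First I would verify that extendability descends to a well-defined property on bordism classes in $\ma{M}(G)=\Omega_2^{SO}(G)$: if $\xi$ is free $G$-bordant to $\xi'$ via a $3$-manifold $N$ with free $G$-action, and $\xi'$ bounds a $3$-manifold $M'$ with a (non necessarily free) $G$-action, then gluing $N$ to $M'$ along $\xi'$ exhibits $\xi$ as extendable. Closure under disjoint union and reversal of orientation then shows that the set $\mathrm{Ext}(G)\subseteq\ma{M}(G)$ of extendable classes is a subgroup.

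Next I would check that corestriction preserves extendability. Given a subgroup $H\leq G$ and an extendable $H$-cobordism $\eta\colon Q\to T$ bounded by a $3$-manifold $N$ with $H$-action, I would form the Borel construction $N\times_H G=(N\times G)/H$, where $H$ acts by $(n,g)\cdot h=(nh,h^{-1}g)$. The key point is that this $H$-action on $N\times G$ is free, because the action on the $G$-factor is free, even if the action on $N$ has fixed points; consequently $N\times_H G$ is a smooth $3$-manifold. Its boundary is $Q\times_H G=\op{cor}(\eta)$, and the right $G$-multiplication on the second factor descends to a $G$-action extending $\op{cor}(\eta)$. Thus the corestriction of an extendable $H$-cobordism is an extendable $G$-cobordism.

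Now I would assemble the pieces. For each prime $p$ dividing $|G|$, fix a Sylow $p$-subgroup $P\leq G$; by hypothesis every $P$-cobordism is extendable, i.e.\ $\mathrm{Ext}(P)=\ma{M}(P)$. By Theorem \ref{kar}, the corestriction map $\op{cor}\colon\ma{M}(P)\to\ma{M}(G)$ surjects onto the $p$-primary component $\ma{M}(G)_p$, and by the previous paragraph its image lies in $\mathrm{Ext}(G)$; hence $\ma{M}(G)_p\subseteq\mathrm{Ext}(G)$. Since $\ma{M}(G)$ is a finite abelian group, it decomposes as $\ma{M}(G)=\bigoplus_p\ma{M}(G)_p$, and because $\mathrm{Ext}(G)$ is a subgroup containing each summand we conclude $\mathrm{Ext}(G)=\ma{M}(G)$, i.e.\ every $G$-cobordism is extendable. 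The main obstacle I anticipate is the verification in the second step: one must confirm that the Borel construction, applied to a $3$-manifold whose $H$-action is free only on the boundary, yields a smooth $3$-manifold with a $G$-action whose restriction to the boundary is the original corestricted principal bundle; this is the technical heart of the argument.
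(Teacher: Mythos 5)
Your proposal is correct and takes essentially the same route as the paper: extend the restricted cobordism over the Sylow subgroup, push the extending $3$-manifold forward via the Borel construction $N\times_H G$ (this is exactly the corestriction on $\Omega_3^{SO,\partial free}$ mentioned after Definition \ref{def1}), and use Theorem \ref{kar} to capture each $p$-primary component of $\ma{M}(G)$. The only differences are cosmetic: you invoke the surjectivity of $\op{cor}:\ma{M}(P)\to\ma{M}(G)_p$ directly, whereas the paper routes through the automorphism $\op{cor}\circ\op{res}: f\mapsto f^{m}$ of $\ma{M}(G)_p$ and applies it to $F^{-1}(f)$, and you spell out the bordism-invariance, subgroup, and primary-decomposition bookkeeping that the paper leaves implicit.
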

\begin{proof}
For $n=|G|$ and $n=p^km$ with $p\not|m$, consider an element $f\in\ma{M}(G)_p$, hence the composition \ben F:=\op{cor}\circ\op{res}:\ma{M}(G)_p\longrightarrow\ma{M}(G)_p\,,\een
is given by $f\Tot f^m$, which is an automorphism of $\ma{M}(G)_p$. By the assumptions, the restriction $\op{res}(f)$ is extendable by a $3$-manifold $M$ with an action of $Q$, therefore, applying the corestriction we obtain that $f^m$ is extendable by the $3$-manifold $\op{cor}(M)$. Similarly, we can start with $F^{-1}(f)$ and we get that $f$ is extendable and the proposition follows.
\end{proof}

\section{Generators for the Schur multiplier}
\label{disi}

In this section we give explicit generators for the Schur multiplier of the dihedral, the symmetric and the alternating groups.

\subsection{Dihedral group}
	For $n\geq 3$, the dihedral group is the group of symmetries of the $n$-regular polygon (with $D_2=\ZZ_2$, $D_{4}=\ZZ_2\times \ZZ_2$), and presentation 
\begin{equation}\label{diedral} D_{2n}=\langle a,b:a^2=1,b^2=1,(ab)^n=1\rangle\,,\end{equation}
where $c:=ab$ is the rotation of $2\pi/n$. The Schur multiplier has the form
\begin{equation}
    \ma{M}(D_{2n})=\left\{
    \begin{array}{cl}
        0 & n=2k+1\,,  \\
        \ZZ_2 & n=2k\,.
    \end{array}
    \right.
\end{equation}
In order to find a generator we show the following.

\begin{prop}\label{pr1}We obtain the following identifications:
\begin{itemize}
    \item[(i)] $\langle c^i,c^j\rangle \sim 1$,
    \item[(ii)] $\left<c^{i},ac^j\right>\sim \langle c,a\rangle^i$,
    \item[(iii)] $\left<ac^{i},c^j\right>\sim \langle c,a\rangle^{-j}$, and
    \item[(iv)] $\left<ac^i,ac^j\right>\sim \langle c,a\rangle^{j-i}$.
\end{itemize}
\end{prop}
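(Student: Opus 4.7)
The plan is to verify each identity in turn using the commutator-bracket relations (\ref{four1})--(\ref{four11}), together with the key dihedral fact that in $D_{2n}$ one has $ca = ac^{-1}$, so that $c^k a = ac^{-k}$ and every commutator in $D_{2n}$ lies in the cyclic subgroup $\langle c\rangle$. A direct computation gives $[c^i,a] = c^{2i}$, $[a,c^j] = c^{-2j}$, $[c^i,ac^j]=c^{2i}$, and $[a,ac^j]=c^{-2j}$. Part (i) is then immediate from (\ref{four11}) with $x=c$.

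For (ii), I would apply (\ref{four5}) to split
\[\langle c^i, ac^j\rangle \sim \langle c^i, a\rangle\,\langle c^i, c^j\rangle^a.\]
The second factor vanishes: $\langle c^i,c^j\rangle \sim 1$ by (i), and (\ref{four4}) with $[c^i,c^j]=1$ then forces its $a$-conjugate to be trivial as well. So it remains to show $\langle c^i, a\rangle \sim \langle c,a\rangle^i$, which I would prove by induction on $i$: relation (\ref{four3}) gives $\langle c^i, a\rangle \sim \langle c^{i-1}, a\rangle^c \langle c, a\rangle$, and (\ref{four4}) then rewrites the conjugation as $\langle c,c^{2(i-1)}\rangle\langle c^{i-1},a\rangle$, whose first factor is trivial by (i). Part (iii) follows immediately from (ii) together with the antisymmetry relation (\ref{four2}): $\langle ac^i,c^j\rangle \sim \langle c^j, ac^i\rangle^{-1} \sim \langle c,a\rangle^{-j}$.

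For (iv), relation (\ref{four3}) gives $\langle ac^i, ac^j\rangle \sim \langle c^i, ac^j\rangle^a\, \langle a, ac^j\rangle$. Applying (\ref{four4}) with $[c^i,ac^j]=c^{2i}$ and then invoking (ii) and (iii), the first factor becomes $\langle a, c^{2i}\rangle\langle c^i,ac^j\rangle \sim \langle c,a\rangle^{-2i}\langle c,a\rangle^{i} = \langle c,a\rangle^{-i}$. For the second factor, (\ref{four5}) together with $\langle a,a\rangle\sim 1$ from (\ref{four1}) reduces it to $\langle a, c^j\rangle^a$; a further use of (\ref{four4}) with $[a,c^j]=c^{-2j}$ and (iii) evaluates this to $\langle c,a\rangle^{2j}\langle c,a\rangle^{-j} = \langle c,a\rangle^{j}$. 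Multiplying the two factors yields $\langle c,a\rangle^{j-i}$.

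The main bookkeeping obstacle is managing the conjugation symbols $\langle y,z\rangle^x$; the decisive simplification used throughout is that every commutator in $D_{2n}$ lies in $\langle c\rangle$, so (\ref{four4}) reduces every conjugation to a pairing of the form $\langle x, c^k\rangle$, and those are controlled by (i) and by the antisymmetric variants of (ii) once (ii) has been established.
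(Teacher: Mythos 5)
Your proof is correct and takes essentially the same route as the paper's: (i) via \eqref{four11}, the recursion $\langle c^i,a\rangle\sim\langle c^{i-1},a\rangle\langle c,a\rangle$ from \eqref{four3} with conjugations killed by \eqref{four4} and (i), then (iii) from \eqref{four2} and (iv) by splitting with \eqref{four3} and reducing the conjugates. The only blemish is the unused preliminary claim $[a,ac^j]=c^{-2j}$ (it is actually $c^{2j}$), which does not affect the argument since your evaluation of $\langle a,ac^j\rangle$ only uses $[a,c^j]=c^{-2j}$.
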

\begin{proof}
The relation (i) follows by \eqref{four11}.  The use of \eqref{four3}, \eqref{four11} and \eqref{four5} implies 
	\begin{align}
	 &\left<c^i,ac^j\right>\sim\left<c^{i},a\right>\left<c^{i},c^{j}\right>^{a}\sim\left<c^{i},a\right>\,,\\
	 &\left<c^i,ac^j\right>=\left<cc^{i-1},ac^j\right>\sim\left<c^{i-1},ac^j\right>^c\left<c,ac^j\right>\sim\left<c^{i-1},a\right>\left<c,a\right>\,. 
		\end{align}
Therefore, we obtain the relation 
\ben
\left<c^{i},a\right>\sim\underbrace{\left<c,a\right>\left<c,a\right>\cdots\left<c,a\right>}_i\,,
\een
which implies (ii). By \eqref{four2} we obtain (iii) as follows
\ben\left<ac^{i},c^j\right>\sim \left<c^j,ac^{i}\right>^{-1}\sim \left<c,a\right>^{-j}\,.\een
Finally, we use \eqref{four5} and \eqref{four1},
\ben\left<ac^{i},ac^j\right>\sim \left<c^i,ac^{j}\right>^{a} \left<a,ac^j\right>\sim\left<c^{-1},a\right>^i \left<a,c^j\right>^a\,,\een
and by \eqref{four2} we obtain (iv).
\end{proof}

\begin{cor}\label{cordie}
    For $n=2k$, the generator of the group $\ma{M}(D_{2n})$ is represented by the element $\langle c^k,a\rangle$.
\end{cor}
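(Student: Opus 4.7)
The plan is to exploit Proposition \ref{pr1} to argue that the entire quotient $\langle D_{2n}, D_{2n}\rangle / B(D_{2n})$ is cyclic, generated by the class of $\langle c, a\rangle$, and then to single out which power lands in the subgroup $\ma{M}(D_{2n}) = Z(D_{2n}) / B(D_{2n})$. Every element of $D_{2n}$ has the form $c^i$ or $ac^i$, so the four cases of Proposition \ref{pr1} exhaust all pairs: each basic generator $\langle g, h\rangle$ reduces modulo $B(D_{2n})$ to some integer power of $\langle c, a\rangle$. A word of such basic generators then collapses to a single power of $\langle c, a\rangle$, because powers of a single group element commute with each other regardless of the ambient group law.

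Next I would identify which powers $\langle c, a\rangle^M$ belong to $Z(D_{2n})$. From $a^2 = 1$ and $c = ab$ one computes $aca^{-1} = c^{-1}$, hence $[c, a] = c^2$ and more generally $[c^i, a] = c^{2i}$. Thus $\langle c, a\rangle^M$ projects to $c^{2M} \in [D_{2n}, D_{2n}]$, which vanishes iff $n \mid 2M$, iff $k \mid M$ (using $n = 2k$). The smallest positive choice $M = k$ gives the desired generator $\langle c, a\rangle^k$, and by Proposition \ref{pr1}(ii) with $j = 0$ this class coincides with $\langle c^k, a\rangle$.

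Finally I would verify the order. Again by Proposition \ref{pr1}(ii), $\langle c, a\rangle^{2k} \sim \langle c^{2k}, a\rangle = \langle 1, a\rangle$, and relation \eqref{four3} applied with $x = y = 1$, $z = a$ gives $\langle 1, a\rangle \sim \langle 1, a\rangle^2$, which forces $\langle 1, a\rangle \sim 1$. Hence $\langle c^k, a\rangle$ has order dividing $2$ in $\ma{M}(D_{2n})$, and since $\ma{M}(D_{2n}) = \ZZ_2$ is nontrivial when $n = 2k$, this element must be the generator.

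The only point requiring care is the collapse step at the outset: one might worry that the (a priori non-commutative) group $\langle D_{2n}, D_{2n} \rangle / B(D_{2n})$ obstructs writing an arbitrary product of basic generators as a single power of $\langle c, a\rangle$. But after Proposition \ref{pr1} rewrites each factor individually as a power of one fixed element, the rearrangement takes place inside a cyclic subgroup and is therefore automatic, so the argument goes through cleanly.
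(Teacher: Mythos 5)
Your argument is correct and follows essentially the same route the paper intends: Corollary \ref{cordie} is meant to be immediate from Proposition \ref{pr1}, which reduces every pair to a power of $\langle c,a\rangle$, together with the commutator computation $[c,a]=c^2$ (so membership in $Z(D_{2n})$ forces the exponent to be a multiple of $k$) and the known fact $\ma{M}(D_{4k})\cong\ZZ_2$. You have simply written out these steps (including the harmless extra check that $\langle 1,a\rangle\sim 1$, which also follows directly from \eqref{four11}) in more detail than the paper does.
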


\subsection{Symmetric group}

The symmetric group $S_n$ is composed of permutation of the set $[n]=\{1,\cdots, n\}$. This group is generated by the transpositions $(ij)$ with $i,j\in[n]$.
The Schur multiplier is given as follows
\begin{equation}
    \ma{M}(S_{n})=\left\{
    \begin{array}{cl}
        0 & n\leq3\,,  \\
        \ZZ_2 & n\geq4\,.
    \end{array}
    \right.
\end{equation}

\begin{lem}\label{pipiripau}
    Let $k\in[n]$, and $\langle \sigma_1,\tau_1\rangle\cdots\langle \sigma_r,\tau_r\rangle$ be a sequence with $\sigma_i,\tau_i,\in S_n$, for $i\in\{1,\cdots, r\}$. 
    There exist a positive number $0\leq s\leq r$ and the following elements:
    \begin{itemize}
        \item[(i)] $a_i,b_i\in S_n$, with $0\leq i\leq s$, such that all $a_i,b_i$ fix $k$, and 
        \item[(ii)] $c_j,d_j\in S_n$, with $0\leq j\leq r-s$, such that for each $j$, at least one of $c_j,d_j$ does not fix $k$, 
    \end{itemize}
    with the relation 
    \ben
    \langle \sigma_1,\tau_1\rangle\cdots\langle \sigma_r,\tau_r\rangle\sim \langle a_1,b_1\rangle\cdots\langle a_s,b_s\rangle\langle c_1,d_1\rangle\cdots\langle c_{r-s},c_{r-s}\rangle\,.
    \een
    Moreover, $s$ is the amount of pairs $\langle\sigma_i,\tau_i\rangle$ such that both $\sigma_i$ and $\tau_i$ fix $k$.
\end{lem}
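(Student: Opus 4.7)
The natural strategy is induction on the number $r$ of pairs, with the base case $r=0$ being vacuous. For the inductive step my plan is to locate the rightmost good pair --- one whose both entries fix $k$ --- and transport it leftward past the bad pairs separating it from the block of good pairs already gathered. The workhorse is relation \eqref{four10}
\[
\langle a,a'\rangle\langle b,b'\rangle \sim \langle[a,a'],[b,b']\rangle\langle b,b'\rangle\langle a,a'\rangle,
\]
which swaps a bad pair $\langle a,a'\rangle$ past a good pair $\langle b,b'\rangle$ at the cost of the correction pair $\langle [a,a'],[b,b']\rangle$ on the far left. The observation driving everything is that since $b,b'$ both fix $k$ so does their commutator $[b,b']$, hence the second component of every such correction automatically fixes $k$.

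The outer induction on $r$ separates the sequence into a ``good block'' and a ``bad block''; an inner induction on the number of bad-preceding-good inversions handles the transport moves. When the correction $\langle [a,a'],[b,b']\rangle$ is itself bad, it simply enlarges the bad block and is absorbed by the inner inductive hypothesis. When $[a,a']=1$ or, more generally, when $[a,a']$ and $[b,b']$ generate a common cyclic subgroup, relations \eqref{four1} and \eqref{four11} make the correction trivial. In the intermediate cases I would decompose $\sigma_i$ and $\tau_i$ into their $k$-fixing and $k$-moving components via a coset decomposition of $S_n$ over $\mathrm{Stab}(k)\cong S_{n-1}$, and apply \eqref{four3} and \eqref{four5} to split each pair into elementary pieces whose good/bad status is individually determined by the entries.

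The main obstacle, and the whole point of the ``moreover'' clause, is that the final $s$ must equal the number of good pairs in the \emph{original} word, not just some number between $0$ and $r$. The subtle point is that when $[a,a']$ accidentally fixes $k$, the correction $\langle[a,a'],[b,b']\rangle$ is itself a good pair and threatens to inflate $s$. To enforce the correct count my plan is to choose judiciously among \eqref{four8}, \eqref{four9}, \eqref{four10} according to which of $a, a'$ moves $k$, so that each individual correction lands in the bad block whenever possible, and to combine consecutive corrections via \eqref{four7} --- which identifies the correction pair with the free-group commutator $[\langle a,a'\rangle,\langle b,b'\rangle]$ --- so that spurious good corrections arising from adjacent swaps cancel in pairs. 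The delicate bookkeeping that shows all spurious good corrections are eliminated, leaving exactly the original count, is the hardest part of the argument.
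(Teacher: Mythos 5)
There is a genuine gap, and it is exactly at the point you flag as ``the hardest part.'' Your transport move is \eqref{four10}, which turns a bad-then-good adjacency $\langle a,a'\rangle\langle b,b'\rangle$ into $\langle[a,a'],[b,b']\rangle\langle b,b'\rangle\langle a,a'\rangle$: every swap lengthens the word by one correction pair, so the total number of pairs is no longer $r$, and the correction pair can genuinely be a nontrivial good pair (e.g.\ $a=(k\,1)(2\,3)$, $a'=(k\,1)(2\,4)$ give $[a,a']=(2\,3\,4)$, which fixes $k$ while the pair $\langle a,a'\rangle$ is bad), so the count $s$ can be inflated. The proposed remedies --- invoking \eqref{four1}/\eqref{four11} in special cases, splitting entries by a coset decomposition over $\mathrm{Stab}(k)$ via \eqref{four3} and \eqref{four5} (which itself changes the number of pairs and their good/bad status), and cancelling ``spurious'' corrections through \eqref{four7}--\eqref{four9} --- are sketched but never carried out, and no invariant is produced that guarantees the surviving word has exactly $r$ pairs of which exactly $s$ are good. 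As written, the argument establishes at best some relation of the gathered form, not the statement with the ``moreover'' clause.

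The missing idea is that one should commute pairs without generating corrections at all. The paper's proof uses \eqref{four2} and \eqref{four6}: for a good pair $\langle a,b\rangle$ and any pair $\langle x,y\rangle$,
\begin{align*}
\left<x,y\right>\left<a,b\right>&\sim\left<a,b\right>\left<b,a\right>\left<x,y\right>\left<a,b\right>\sim\left<a,b\right>\left<x^{[b,a]},y^{[b,a]}\right>,
\end{align*}
so the bad pair is merely replaced by its conjugate under $[b,a]$. Since $a,b$ fix $k$, so does $[b,a]$, hence conjugation by it preserves which of the two entries fix $k$: a bad pair stays bad, the word length is unchanged, and iterating this move gathers all good pairs on the left with $s$ equal to the original number of good pairs, which is precisely the ``moreover'' statement. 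If you want to rescue your write-up, replace the \eqref{four10}-based transport by this conjugation move; the rest of your inductive framework (induction on the number of bad-before-good inversions) then goes through immediately.
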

\begin{proof}
    	It suffices to note that for pairs $\langle a,b\rangle$ and $\langle x,y\rangle$, with $a,b,x,y\in S_n$, such that $a$ and $b$ fix $k$ there is the relation
    	\begin{align*}
    	    \left<x,y\right>\left<a,b\right>&\sim\left<a,b\right>\left<b,a\right>\left<x,y\right>\left<a,b\right>\\
    	    &\sim\left<a,b\right>\left<x^{[b,a]},y^{[b,a]}\right>\,,
    	\end{align*}    
    	where we have used \eqref{four6}. An iterative application of this process, allows us to put all terms fixing $k$ to the left in the sequence.
\end{proof}

\begin{prop}\label{propsym}
Assume $n\geq 4$, and take elements $\sigma_i,\tau_i,\sigma'_j,\tau'_j\in S_n$, for $1\leq i\leq r$ and $1\leq j\leq s$, with the same commutator, i.e., 
\begin{equation}
[\sigma_1,\tau_1]\cdots[\sigma_r,\tau_r]=[ \sigma'_1,\tau'_1]\cdots[\sigma'_s,\tau'_s]\,.
\end{equation} 
Therefore, for the element $u:=\langle(1,2),(3,4)\rangle$, there is the relation  
\begin{equation}
 \langle \sigma_1,\tau_1\rangle\cdots\langle \sigma_r,\tau_r\rangle\sim u^k\langle \sigma'_1,\tau'_1\rangle\cdots\langle \sigma'_s,\tau'_s\rangle\,,
\end{equation}
with $k\in\lbrace0,1\rbrace$.
\end{prop}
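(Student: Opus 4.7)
The plan is to form the ``difference'' element
\[
\omega := \langle \sigma_1, \tau_1\rangle\cdots\langle \sigma_r,\tau_r\rangle\,\langle \tau'_s,\sigma'_s\rangle\cdots\langle \tau'_1,\sigma'_1\rangle,
\]
which, via the inverse formula $\langle x,y\rangle^{-1} \sim \langle y,x\rangle$ coming from \eqref{four2}, represents the left-hand side times the formal inverse of the right-hand side. The hypothesis that the commutator products agree exactly says that the image of $\omega$ under the map $\langle x,y\rangle \mapsto [x,y]$ is trivial, i.e., $\omega \in Z(S_n)$. The proposition is therefore equivalent to the statement that every element of $\ma{M}(S_n) = Z(S_n)/B(S_n)$ equals $u^k$ with $k\in\{0,1\}$, i.e., that $u = \langle(1,2),(3,4)\rangle$ is a generator of $\ma{M}(S_n)\cong\ZZ_2$.

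I would then prove this by induction on $n\geq 4$. For the base case $n=4$, I would combine two ingredients: the known identity $\ma{M}(S_4)\cong \ZZ_2$, together with a direct verification that $u\not\sim 1$ in $B(S_4)$. The latter can be checked either (i) via the binary octahedral double cover $\tilde{S}_4 \twoheadrightarrow S_4$, using that a lift of $u$ to $\tilde{S}_4$ represents the nontrivial central element, or (ii) by a finite computation applying the relations \eqref{four1}--\eqref{four11} to a complete list of representatives of classes in $Z(S_4)/B(S_4)$.

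For the inductive step, I would take any representative of $\omega$ and apply Lemma \ref{pipiripau} with the fixed element $n\in[n]$, to rewrite $\omega$ as a product $\alpha\cdot\beta$, where $\alpha$ is a product of pairs whose entries all lie in $S_{n-1}=\mathrm{Stab}(n)\subset S_n$, and $\beta$ is a product of pairs each having at least one entry moving $n$. By the inductive hypothesis applied inside $S_{n-1}\hookrightarrow S_n$, we have $\alpha\sim u^{k_1}$ modulo $B(S_{n-1})\subseteq B(S_n)$. To handle $\beta$, I would peel off transpositions of the form $(n,i)$ from each pair: if $c$ moves $n$ with $c(n)=i$, then $c=(n,i)c'$ with $c'\in S_{n-1}$, and the bilinearity relations \eqref{four3}, \eqref{four5} allow us to split $\langle c,d\rangle$ into pairs involving $(n,i)$ and pairs lying in $S_{n-1}$ (up to conjugation, handled by \eqref{four6}--\eqref{four10}). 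An iteration combined with \eqref{four11} and the dihedral-style manipulations of Proposition \ref{pr1} should collapse the pieces involving $(n,\cdot)$ into $B(S_n)$, possibly contributing a single factor $u$.

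The main obstacle is precisely this last reduction of $\beta$: commuting and splitting the non-fixing-$n$ pairs introduces conjugation factors via \eqref{four6}, so the accounting of what remains in $Z(S_n)$ after each step is delicate, and care is required to ensure that each reduction step stays inside $B(S_n)$ rather than merely inside the ambient group $\langle G,G\rangle$. Once these bookkeeping details are discharged, the combination ``inductive hypothesis on $\alpha$ + reduction of $\beta$'' gives $\omega\sim u^{k_1+\epsilon}$ for some $\epsilon\in\{0,1\}$, and since $\ma{M}(S_n)\cong\ZZ_2$ kills squares of $u$, we conclude $k\in\{0,1\}$ as required.
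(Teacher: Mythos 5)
Your overall framing is fine: forming the difference element $\omega$ and reducing the proposition to ``every class in $\ma{M}(S_n)$ is $u^k$'' is exactly how the paper's proof begins (its sequence \eqref{week}), and the reduction to transpositions and to pairs of the form $\langle(i,j),(j,k)\rangle$ via \eqref{four3}, \eqref{four4}, \eqref{four5} is also available to you. But your inductive step has a genuine gap, and it sits precisely where you flag it. After applying Lemma \ref{pipiripau} you must deal with the block $\beta$ of pairs that move the distinguished letter, and your plan --- peel off $(n,i)$ from each entry, split by bilinearity, and ``collapse the pieces involving $(n,\cdot)$ into $B(S_n)$, possibly contributing a single factor $u$'' --- is not an argument: splitting $\langle (n,i)c',d\rangle$ via \eqref{four3} produces a conjugated term $\langle c',d\rangle^{(n,i)}$ whose entries again move $n$, and absorbing conjugations via \eqref{four4} or \eqref{four6} creates new pairs, so the naive iteration neither terminates nor visibly lands in $B(S_n)$. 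There is also a smaller but real error earlier in the step: you apply the inductive hypothesis to $\alpha$ alone to get $\alpha\sim u^{k_1}$, but $\alpha$ need not have trivial commutator (only the full product $\omega$ does), and the proposition for $n-1$ compares two sequences with equal commutators; by itself it says nothing about a single sequence.

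The paper closes exactly this gap with two ingredients your proposal lacks. First, after reducing to overlapping transposition pairs (each touching only three letters), a pigeonhole count shows that for $n\geq 7$ one can choose the letter $x$ fixed by the most terms, so that the non-fixing block has length at most $3(r+s)/n<m$; your choice of the fixed letter $n$ gives no such control. Second, the paper runs a double induction, on $n$ and on the maximal length $m$: it introduces an auxiliary short sequence $C$ with entries in $S_{n-1}$ having the same commutator as both blocks, applies the induction on $n$ to compare $A$ with $C$, and the induction on $m$ to compare $B$ with $C$, never attempting to algebraically collapse the non-fixing block at all; the small cases $4\leq n\leq 6$ and $m=1$ are handled by exhaustion and by $\ma{M}(S_3)=0$. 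Separately, the shortcut you gesture at --- citing $\ma{M}(S_n)\cong\ZZ_2$ and proving $u\not\sim 1$ via a double cover --- would be a legitimately different proof, but you only verify nontriviality for $n=4$ and do not argue that $u$ stays nontrivial under $S_4\hookrightarrow S_n$ (e.g.\ via the double covers of $S_n$ for all $n$), so as written neither route in your proposal is complete.
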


\begin{proof}
First, we observe that from \eqref{four3} and \eqref{four5}, we can assume that all the elements $\sigma_i,\tau_i,\sigma'_j,\tau'_j\in S_n$ are transpositions. 
By \eqref{four4}, every pair $\langle\sigma,\tau\rangle$ with $\sigma$ and $\tau$ disjoint transpositions is in the same class as the pair $u:=\left<(1,2),(3,4)\right>$. Therefore, we can assume that the pairs are of the form $\left<(i,j),(j,k)\right>$, with $i,j$ and $k$ different numbers.

By exhaustion, the proposition follows for the symmetric group $S_n$, with $4\leq n \leq 6$. We proceed by induction for $n\geq7$ and we  suppose that for $k< n$, the generator of the Schur multiplier $\ma{M}(S_n)$ is given by the element $u:=\left<(1,2),(3,4)\right>$.
Set by $m$ the maximum of $r$ and $s$, for the sequences $\langle\sigma_1,\tau_1\rangle\cdots\langle \sigma_r,\tau_r\rangle$ and $\langle \sigma'_1,\tau'_1\rangle\cdots\langle \sigma'_s,\tau'_s\rangle$. 
For $m=1$, the proposition follows from the triviality of $\ma{M}(S_3)$.
Suppose that our proposition follows for sequences with length $l<m$. We consider the sequence 
\ben\label{week}
\langle\sigma_1,\tau_1\rangle\cdots\langle \sigma_r,\tau_r\rangle
\left(\langle \sigma'_1,\tau'_1\rangle\cdots\langle \sigma'_s,\tau'_s\rangle\right)^{-1}
\sim\langle\sigma_1,\tau_1\rangle\cdots\langle \sigma_r,\tau_r\rangle\langle \tau'_s,\sigma'_s\rangle\cdots\langle \tau'_1,\sigma'_1\rangle\,,
\een
which has trivial commutator and length given by $M:=r+s\leq 2m$. Let $x\in\lbrace1,\cdots,n\rbrace$ be the number that is fixed by the most terms of the sequence \eqref{week}. Given that the sequences have non trivial terms, each term permutes $3$ different numbers in $\{1,2,\cdots,n\}$. Therefore, the number $x$ is not fixed by at most $\frac{3(r+s)}{n}$ terms. Given that $\frac{3(r+s)}{n}\leq\frac{3(2m)}{7}<m$, hence $x$ is not fixed by at most $m-1$ terms. By Lemma \ref{pipiripau}, we can find an equivalent sequence for \eqref{week}, with the following form 
	\ben\underbrace{\left<\alpha_1,\beta_1\right>\cdots\left<\alpha_t,\beta_t\right>}_{\text{fix number x}}\underbrace{\left<\alpha_{t+1},\beta_{t+1}\right>\cdots\left<\alpha_M,\beta_M\right>}_{\text{do not fix number x}}\,,\een
	where:
	\begin{itemize}
	    \item[i)] $M-t< m$;
	    \item[ii)] the $\alpha_i,\beta_i\in S_n$, with $0\leq i\leq t$, fix $x$; and
	\item[iii)]  the $\alpha_j,\beta_j\in S_n$, with $t+1\leq j\leq M$, at least one does not fix $x$.
	\end{itemize}
	Moreover, by the proof of Lemma \ref{pipiripau}, the elements $\alpha_i,\beta_i,\alpha'_j,\beta'_j\in S_n$ are again transpositions.
Now we consider the sequences
\ben
A:=\left<\alpha_1,\beta_1\right>\cdots\left<\alpha_t,\beta_t\right>
\een
and
\ben
B:=\left(\left<\alpha_{t+1},\beta_{t+1}\right>\cdots\left<\alpha_M,\beta_M\right>\right)^{-1}=
\left<\beta_M,\alpha_M\right>\cdots\left<\beta_{t+1},\alpha_{t+1}\right>\,,
\een
where both sequences have the same commutator. Furthermore, the sequence $A$ has pairs composed by elements in $S_{n-1}$ because they fix $x$. By our induction hypothesis, for $n$, we conclude that the Schur multiplier $\ma{M}(S_{n-1})$ is generated by $u=\langle(1,2),(3,4)\rangle$. Therefore, $A\sim u^iC$ for $i\in\{0,1\}$ and $C$ is a sequence of pairs with elements in $S_{n-1}$. We can take $C$ to be of  length $< m$, as it is has the same commutator as the chain $B$ of length $<m$.
By the other induction hypothesis, for $m$, since $B$ and $C$ have length less than $m$, then there is $j\in \{0,1\}$ such that $B\sim u^j C$. This shows that the product of our initial sequences in \eqref{week} is equivalent to $u^{i-j}$ and the proof of the proposition follows. 
\end{proof}	

\begin{cor}\label{coraz2}
    For $n\geq4$, the generator of the group $\ma{M}(S_{n})$ is represented by the element $u:=\langle (1,2),(3,4)\rangle$.
\end{cor}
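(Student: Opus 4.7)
The plan is to deduce the corollary directly from Proposition \ref{propsym}. A general element of $\ma{M}(S_n)=Z(S_n)/B(S_n)$ is represented by a word
$w=\langle\sigma_1,\tau_1\rangle\cdots\langle\sigma_r,\tau_r\rangle$
lying in $Z(S_n)$, i.e., satisfying $[\sigma_1,\tau_1]\cdots[\sigma_r,\tau_r]=1$.

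First I would apply Proposition \ref{propsym} with this $w$ and the comparison sequence taken to be the empty sequence (equivalently, $\langle 1,1\rangle$, which lies in $B(S_n)$ by \eqref{four1}); both sides have trivial product of commutators, so the hypothesis of the proposition is satisfied. The conclusion gives $w\sim u^{k}$ for some $k\in\{0,1\}$, where $u=\langle(1,2),(3,4)\rangle$. Since $w$ was an arbitrary representative of an arbitrary class, this says that $\ma{M}(S_n)$ is generated, as a group, by the class of $u$, and moreover has order at most two.

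To finish, I would combine this with the stated computation $\ma{M}(S_n)\cong\ZZ_2$ for $n\geq 4$: since $\ma{M}(S_n)$ is non-trivial but contains at most the two classes $1$ and $u$, the element $u$ must represent the non-trivial class and is therefore the generator. There is really no obstacle here; all of the combinatorial work has already been absorbed into Proposition \ref{propsym}, and the only mild point is making sure that the empty (or trivial) sequence is admissible as one of the two inputs, which is immediate from the definitions.
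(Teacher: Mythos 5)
Your argument is correct and is essentially the paper's intended route: Corollary \ref{coraz2} is read off from Proposition \ref{propsym} by comparing an arbitrary word in $Z(S_n)$ with a trivial sequence, which forces every class to be $1$ or $u$, and then the known value $\ma{M}(S_n)\cong\ZZ_2$ for $n\geq 4$ pins down $u$ as the nontrivial generator. Your handling of the trivial comparison sequence via $\langle 1,1\rangle\sim 1$ (relation \eqref{four1}) is a reasonable and adequate way to make the application of the proposition literal.
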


\subsection{Alternating group}
The alternating group $A_n$ is the normal subgroup of $S_n$ with index $2$. The Schur multiplier has the form 
\begin{equation}
    \ma{M}(A_{n})=\left\{
    \begin{array}{cl}
        0 & n\leq 3\,,  \\
        \ZZ_2 & n=4,5,  \\
        \ZZ_6 & n=6,7,   \\
        \ZZ_2 & n\geq8\,.
    \end{array}
    \right.
\end{equation}
\begin{prop}  For $n\geq4$, the element $\langle(1,2)(3,4),(1,3)(2,4)\rangle$ is nontrivial in $\ma{M}(A_{n})$. 
\end{prop}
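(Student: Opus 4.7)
The plan is to detect the non-triviality of $\langle\sigma_1,\sigma_2\rangle:=\langle(1,2)(3,4),(1,3)(2,4)\rangle$ by exhibiting a central extension of $A_n$ by $\ZZ/2$ in which the lifts of $\sigma_1$ and $\sigma_2$ fail to commute. Concretely, I consider the permutation action of $A_n$ on the augmentation hyperplane $H=\{x\in\RR^n:\sum x_i=0\}$, giving a homomorphism $A_n\to\op{SO}(n-1)$; since $n\ge 4$ implies $n-1\ge 3$, pulling back the non-trivial double cover $\op{Spin}(n-1)\to\op{SO}(n-1)$ produces a central extension
\[1\longrightarrow\ZZ/2\longrightarrow\widetilde{A}_n\longrightarrow A_n\longrightarrow 1\,.\]

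To compute the lifts, I decompose $H$ into isotypic components under the Klein four-group $V=\langle\sigma_1,\sigma_2\rangle$. A character count yields $H\cong (n-4)\chi_{++}\oplus\chi_{+-}\oplus\chi_{-+}\oplus\chi_{--}$, where $\chi_{\e_1\e_2}$ is the one-dimensional representation on which $\sigma_i$ acts by $\e_i$. I pick unit vectors $e_0\in\chi_{+-}$, $e_1\in\chi_{-+}$, $e_2\in\chi_{--}$; they are automatically pairwise orthogonal, since distinct $V$-isotypic components are orthogonal in the $V$-invariant standard inner product. Then $\sigma_1$ is rotation by $\pi$ in $\op{span}(e_1,e_2)$ and $\sigma_2$ is rotation by $\pi$ in $\op{span}(e_0,e_2)$. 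In the Clifford algebra model of $\op{Spin}(n-1)$ these rotations lift to $\tilde\sigma_1=e_1e_2$ and $\tilde\sigma_2=e_0e_2$; using $e_ie_j=-e_je_i$ and $e_i^2=1$ one gets $(e_1e_2)(e_0e_2)=e_0e_1$, and hence
\[[\tilde\sigma_1,\tilde\sigma_2]=\big((e_1e_2)(e_0e_2)\big)^2=(e_0e_1)^2=-1\,,\]
the nontrivial central element of $\widetilde{A}_n$.

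To close the argument, I invoke the transgression associated to a central extension $1\to A\to E\to G\to 1$: centrality of $A$ makes the commutator $[\tilde a,\tilde b]\in A$ of any lifts of commuting $a,b\in G$ independent of the choice of lifts, and a direct inspection of the relations \eqref{four1}--\eqref{four4} shows that the assignment $\langle a,b\rangle\mapsto[\tilde a,\tilde b]$ descends to a well-defined homomorphism $\tau:\ma{M}(G)\to A$. Applied to our extension this gives $\tau(\langle\sigma_1,\sigma_2\rangle)=-1\ne 0$ in $\ZZ/2$, forcing $\langle\sigma_1,\sigma_2\rangle$ to be non-zero in $\ma{M}(A_n)$.

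The main hurdle is the Clifford algebra bookkeeping, and specifically the need to choose the right orthonormal frame. The naive $(-1)$-eigenspaces $\op{span}\{(1,-1,0,0,\ldots),(0,0,1,-1,0,\ldots)\}$ of $\sigma_1$ and $\op{span}\{(1,0,-1,0,\ldots),(0,1,0,-1,0,\ldots)\}$ of $\sigma_2$ inside $\RR^n$ are not mutually orthogonal, as the two planes share the line through $(1,-1,-1,1,0,\ldots,0)$. One must first pass to the $V$-isotypic basis before writing the Spin lifts in the simple form $e_ie_j$, otherwise the commutator cannot be read off cleanly from the Clifford relations.
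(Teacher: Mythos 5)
Your proof is correct, but it takes a genuinely different route from the paper. The paper stays inside the relation calculus of Section \ref{disi}: using \eqref{four3}, \eqref{four4}, \eqref{four5} and the vanishing of $\ma{M}(S_3)$, it rewrites $\langle(1,2)(3,4),(1,3)(2,4)\rangle$ until it equals $u=\langle(1,2),(3,4)\rangle$ in $\ma{M}(S_n)$, which is the generator by Proposition \ref{propsym}, and then deduces nontriviality in $\ma{M}(A_n)$ by functoriality of the inclusion $A_n\subset S_n$. You instead detect the class directly in $\ma{M}(A_n)$ by pairing it against the central extension $2.A_n$, i.e.\ the pullback of $\op{Spin}(n-1)\to \op{SO}(n-1)$ along the action on the augmentation hyperplane, and computing that the lifts of the two involutions anticommute in the Clifford algebra. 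Your isotypic decomposition of $H$ under the Klein four-group is correct (for the standard inner product the three sign characters are spanned by $\epsilon_1+\epsilon_2-\epsilon_3-\epsilon_4$, $\epsilon_1-\epsilon_2+\epsilon_3-\epsilon_4$, $\epsilon_1-\epsilon_2-\epsilon_3+\epsilon_4$), the identification of $\sigma_1,\sigma_2$ as $\pi$-rotations in the planes $\op{span}(e_1,e_2)$ and $\op{span}(e_0,e_2)$ is right, and the commutator computation $(e_1e_2)(e_0e_2)=-(e_0e_2)(e_1e_2)$ gives the central $-1$ independently of the sign convention $e_i^2=\pm1$. The one step you leave compressed is the well-definedness of the transgression $\tau$: what is actually needed is only that the homomorphism from the free group $\langle G,G\rangle$ to $\widetilde{A}_n$ sending $\langle a,b\rangle\mapsto[\tilde a,\tilde b]$ kills $B(G)$, which follows from checking that \eqref{four1}--\eqref{four4} become commutator identities in any central extension (using that commutators of lifts do not depend on the lifts); this is routine but should be recorded. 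Comparing the two arguments: the paper's proof is elementary, self-contained within its combinatorial framework, and simultaneously identifies the class with the $S_n$-generator $u$; yours avoids the induction and case analysis behind Proposition \ref{propsym} entirely, works uniformly for all $n\geq4$ including $n=6,7$, and is more conceptual in that it exhibits the classical spin double cover as the explicit obstruction to extendability, though by itself it does not relate the class to $u$ in $\ma{M}(S_n)$ (which the proposition does not require).
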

\begin{proof}
 Because of the relations \eqref{four3} and \eqref{four5} in $\ma{M}(S_n)$, we have the following 
     \begin{align*}
         \langle(1,2)(3,4),(1,3)(2,4)\rangle&\sim\langle(3,4),(2,3)(1,4)\rangle \langle(1,2),(1,3)(2,4)\rangle\\
        &\sim\langle(3,4),(2,3)\rangle\langle(2,4),(1,4)\rangle \langle(1,2),(1,3)\rangle \langle(2,3),(2,4)\rangle
    \end{align*}
We also have from \eqref{four4}, \eqref{four5} and $\langle(2,4),(1,4)\rangle=\langle(2,3),(1,3)\rangle^{(3,4)}$ that
\begin{align*}
         \langle(2,4),(1,4)\rangle&\sim \langle(3,4),[(2,3),(1,3)]\rangle\langle(2,3),(1,3)\rangle=\langle(3,4),(1,2)(1,3)\rangle\langle(2,3),(1,3)\rangle\\
         &\sim\langle(3,4),(1,2)\rangle\langle(3,4),(2,3)\rangle\langle(2,3),(1,3)\rangle=u\langle(3,4),(2,3)\rangle\langle(2,3),(1,3)\rangle
\end{align*}
As $[(3,4),(2,3)]=[(2,3),(2,4)]$, $[(2,3),(1,3)]=[(1,3),(1,2)]$ and $\ma{M}(S_3)=0$, we have that $\langle(3,4),(2,3)\rangle\sim\langle(2,3),(2,4)\rangle$ and $\langle(2,3),(1,3)\rangle\sim\langle(1,3),(1,2)\rangle$. Therefore,
\begin{align*}
    \langle(1,2)(3,4),(1,3)(2,4)\rangle&\sim\langle(2,3),(2,4)\rangle\langle(2,4),(1,4)\rangle \langle(1,2),(1,3)\rangle \langle(2,3),(2,4)\rangle\\
    &\sim u\langle(2,3),(2,4)\rangle^2\langle(1,3),(1,2)\rangle \langle(1,2),(1,3)\rangle \langle(2,3),(2,4)\rangle\\
        &\sim u\langle(2,3),(2,4)\rangle^3 \sim u = \langle(1,2),(3,4)\rangle\,,
\end{align*}
where $\langle(2,3),(2,4)\rangle^3$ vanishes since $[(2,3),(2,4)]^3=1$ and $\ma{M}(S_3)=0$. As a consequence, the element 
$\langle(1,2)(3,4),(1,3)(2,4)\rangle$ is nontrivial in $\ma{M}(S_n)$, and hence it is also nontrivial in $\ma{M}(A_n)$ for $n\geq 4$.  
\end{proof}
\begin{cor}\label{ccor}  For $n\geq4$ and $n\not\in\lbrace6,7\rbrace$, the generator of the group $\ma{M}(A_{n})$ is represented by the element    $\langle(1,2)(3,4),(1,3)(2,4)\rangle$.
\end{cor}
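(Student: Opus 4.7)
The corollary is an immediate consequence of the preceding proposition together with the known computation of $\ma{M}(A_n)$ quoted at the start of this subsection, so the plan is essentially a one-line structural argument rather than a fresh computation.

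First I would invoke the classification of the Schur multiplier for alternating groups recalled just above: $\ma{M}(A_n)\cong\ZZ_2$ for $n=4,5$ and for $n\geq 8$, while the cases $n=6,7$ (where $\ma{M}(A_n)\cong\ZZ_6$) are explicitly excluded by the hypothesis $n\not\in\{6,7\}$. So for every $n$ covered by the corollary we have $\ma{M}(A_n)\cong\ZZ_2$, a group of order two.

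Next I would quote the preceding proposition, which establishes that the class of $\langle(1,2)(3,4),(1,3)(2,4)\rangle$ is nontrivial in $\ma{M}(A_n)$ for all $n\geq 4$; the argument there reduces the pair to $u=\langle(1,2),(3,4)\rangle$ in $\ma{M}(S_n)$ modulo $\ma{M}(S_3)=0$ and nontriviality of $u$ in $\ma{M}(S_n)$ (Corollary \ref{coraz2}), and then passes nontriviality from $S_n$ down to $A_n$.

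To finish, I would simply observe that in a cyclic group of order two any nontrivial element is a generator. Hence under the standing assumption $n\geq 4$ and $n\not\in\{6,7\}$, the nontrivial class $\langle(1,2)(3,4),(1,3)(2,4)\rangle$ generates $\ma{M}(A_n)\cong\ZZ_2$, which is the content of the corollary. There is essentially no obstacle here: all the work was done in the previous proposition, and the corollary merely packages the conclusion using the order-$2$ hypothesis. The only thing one must be careful about is to flag clearly why $n=6,7$ are excluded, namely because in those cases $\ma{M}(A_n)$ has order six and so a single $\ZZ_2$-element cannot generate it.
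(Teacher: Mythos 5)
Your proposal is correct and matches the paper's (implicit) reasoning exactly: the corollary follows from the preceding proposition's nontriviality statement together with the quoted fact that $\ma{M}(A_n)\cong\ZZ_2$ for $n\geq 4$, $n\not\in\{6,7\}$, since any nontrivial element of a group of order two generates it. No gap.
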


\section{Extending group actions on surfaces}
\label{nonfree}
This section contains the main applications of this work. We start with free actions of abelian, dihedral, symmetric and alternating groups and then, we show that these actions extend to actions on 3-manifolds. Finally, we see the case of
non-necessarily free actions for abelian and dihedral groups. 

\subsection{Free actions on surfaces} 
\begin{defn}
Consider a compact oriented surface $S$ with a (free) group action \ben \alpha:S\times G\longrightarrow S\,.\een We say that the action is {\it extendable} if there exists a $3$-manifold $M$ with boundary $\partial M=S$, with an action of $G$ of the form 
$\overline{\alpha}:M\times G\longrightarrow M$, which extends $\alpha$, i.e., we have the commutative diagram 
\ben
\xymatrix{S\times G\ar@{_(->}[d]\ar[r]^\alpha & S\ar@{_(->}[d] \\  M\times G\ar[r]_{\overline{\alpha}} & M\,.}
\een
\end{defn}

\begin{proof}[Proof of Theorem \ref{thm1}]
By Theorem \ref{extAbe} we know that any free action of a finite abelian group is extendable. 
For dihedral groups $D_{2n}$, we have two cases to consider. One is for $n=2k+1$, but since $\ma{M}(D_{4k+2})=0$, then any free action is extendable. The other is for $n=2k$, where by Corollary \ref{cordie}, the generator of the Schur multiplier is represented by a $G$-cobordism over a closed surface of genus one, therefore, by Proposition \ref{prC1} these free actions are extendable.
Now consider free actions of the symmetric groups $S_n$, since $\ma{M}(S_n)=0$ for $n\leq 3$, it remains to prove the extension for $n\geq 4$.
Similar as for dihedral groups, by Corollary \ref{coraz2} these free actions are extendable. For the alternating groups $A_n$ the free actions are extendable for $n\leq 3$. Again, for $n\geq 4$ and $n\neq 6,7$, by Corollary \ref{ccor} these actions are extendable. In the case of free actions of $A_n$ for $n=6,7$, we notice that the Sylow subgroups of $A_6$ and $A_7$ have the following isomorphic types $\{D_8,\ZZ_3\times \ZZ_3\times \ZZ_3,\ZZ_5,\ZZ_7 \}$ and because of Proposition \ref{Sylow}, we obtain that these free actions are extendable.
\end{proof}

\subsection{Non-necessarily free action on surfaces}
Now we consider non-necessarily free actions of finite abelian groups and dihedral groups. The extension of these actions was already given by Reni-Zimmermann \cite{RZ} with $3$-dimensional methods and by Hidalgo \cite{Hida} with $2$-dimensional methods.

\begin{thm}\label{teore1}
Let $G$ be a finite abelian group with an action on a closed oriented surface where the fixed points are of two types:
\begin{itemize}
    \item[(i)]fixed points produced by hyperelliptic involutions, see Figure \ref{invo},
\begin{figure}
    \centering
    \includegraphics[scale=1.25]{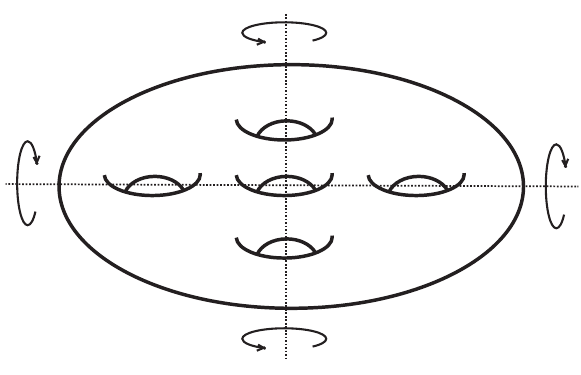}
    \caption{Hyperelliptic involutions}
    \label{invo}
\end{figure} and  
    \item[(ii)] ramification points with complementary monodromies (signature $>2$).

\end{itemize}
Then the action is extendable by a $3$-dimensional handlebody.
\end{thm}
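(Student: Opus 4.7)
The plan is to reduce the theorem to the free-action case already settled by Theorem \ref{extAbe}, via a two-stage unfolding that kills the two types of fixed points in turn. First, let $H \subset G$ be the (normal, since $G$ is abelian) subgroup generated by the hyperelliptic involutions that occur in the $G$-action on $S$. The branched cover $S \to S/H$ eliminates the type (i) fixed points, and the induced $(G/H)$-action on $S/H$ retains only type (ii) ramifications. A handlebody extension of a hyperelliptic involution is classical: a closed surface of genus $g$ is the double cover of $S^2$ branched at $2g+2$ points, and this extends to the double cover of $B^3$ branched along $g+1$ unknotted arcs, which is the genus-$g$ handlebody. So it will suffice to extend the quotient action by a handlebody and lift afterwards.

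For the second stage I would pair up the type (ii) ramification points according to their complementary monodromies and perform an equivariant surgery on each pair: excise two small $G$-invariant disks around the paired points and glue in a $G$-invariant annulus on which the cyclic subgroup generated by the common monodromy $g$ acts by rotation. Because the two monodromies are inverse, the gluing is compatible with a free action, and the resulting surface $S'$ carries a free $G$-action (of higher genus than $S/H$). By Theorem \ref{extAbe} this free action extends to a $3$-manifold, and the explicit construction in Proposition \ref{prC1} (where the extension of a genus-one $G$-cobordism is built by crossing an invariant disk with a free factor) shows that the extension can be assembled by attaching $1$-handles to balls, i.e.\ as a handlebody $V'$.

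To recover the extension of the original action I would reverse the two unfoldings inside $V'$. For each attached annulus in $S'$, isotope a properly embedded arc across $V'$ joining the two removed disks, and replace a neighborhood of that arc by a rotation-invariant tube with the arc itself as a fixed axis carrying the prescribed monodromy $g$; this re-introduces the pair of type (ii) fixed points on the boundary without changing the underlying handlebody type, since a neighborhood of a properly embedded unknotted arc in a handlebody is again a handlebody. Finally, taking the double branched cover along the union of fixed-axis arcs that lift the hyperelliptic involutions restores the full $G$-action on $S$, and the double branched cover of a handlebody along properly embedded unknotted arcs is again a handlebody.

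The main obstacle will be verifying that each of these equivariant operations preserves the handlebody property simultaneously. The delicate steps are (a) choosing the arcs re-introduced in stage two to be boundary-parallel and unknotted in $V'$ and collectively arranged into a $G$-invariant system, and (b) arranging the branch locus of the stage-one double cover so that its arcs are unknotted and compatible with the arcs produced in stage two. The abelian hypothesis gives enough flexibility in isotoping invariant arcs inside $V'$; the core technical work is to make all these equivariant choices compatibly, so that the final $3$-manifold obtained after reversing both stages is again a handlebody with the prescribed boundary action.
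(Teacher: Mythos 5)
Your proposal follows essentially the same route as the paper: quotient by the hyperelliptic involutions, join the paired ramification points with complementary monodromies by cylinders to obtain a free action, extend that free action via Theorem \ref{extAbe} by a handlebody, and then reverse the two operations (cutting the cylinders, which by the construction in Proposition \ref{prC1} reintroduces the isolated fixed points, and unfolding the hyperelliptic involutions by branched covers). Your added attention to keeping the branch arcs unknotted and boundary-parallel so the handlebody structure survives each reversal is a reasonable elaboration of details the paper leaves implicit, but the underlying argument is the same.
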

\begin{proof}
The extension is performed in some steps. First, we consider the quotient of the surface by the hyperelliptic involutions in some order and smooth the corners with the aim to obtain a smooth closed oriented surface. 
The hyperelliptic involutions act in the set of ramification points with signature $>2$ and in the quotient we still have ramifications points grouped into pairs.
Then we connect the complementary monodromies by cylinders in order to have a free action over a closed oriented surface. From Theorem \ref{extAbe}, we have that this free action is extendable and by the proof of Proposition \ref{pr1}, this extension is by means of a 3-dimensional handlebody. 
Now we disconnect the complementary monodromies by cutting in each of the cylinders that we glued. Each cylinder has only one fixed point by the proof of Proposition \ref{prC1}. Finally, we extend the action to the original surface by an unfolding process using the hyperelliptic involutions in the reverse order in which we constructed the initial quotient surface. 
\end{proof}

\begin{thm}\label{teore2}
Every action of a dihedral group $D_{2n}$ over a closed orientable surface is extendable by a $3$-dimensional handlebody. 
\end{thm}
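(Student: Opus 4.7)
The plan is to adapt the strategy of Theorem \ref{teore1} to the dihedral setting, using the explicit generator of $\mathcal{M}(D_{2n})$ provided by Corollary \ref{cordie}. First, I would describe the isotropy structure of a $D_{2n}$-action on a closed oriented surface $S$: since $D_{2n}$ acts by orientation-preserving diffeomorphisms, nontrivial stabilizers are cyclic and the fixed set of each non-identity element consists of isolated points. For $n=2k$ the central rotation $c^k$ and every reflection $ac^j$ is an involution, and these will play the role of the hyperelliptic involutions of Theorem \ref{teore1}.

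Second, I would run the unfolding/refolding procedure from the proof of Theorem \ref{teore1}, adapted to $D_{2n}$: take successive quotients of $S$ by the involutions in a fixed order, smooth the corners to obtain a smooth closed surface, and pair up the remaining ramification points with complementary monodromies by gluing in equivariant cylinders. This reduces the problem to extending a \emph{free} $D_{2n}$-action on some auxiliary closed oriented surface $S'$.

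Third, for the free action on $S'$, Corollary \ref{cordie} identifies the only obstruction to extendability as a power of the genus-one generator $\langle c^k,a\rangle$ in $\mathcal{M}(D_{2n})$ when $n=2k$; for $n=2k+1$ the Schur multiplier is trivial and there is no obstruction at all. By Proposition \ref{prC1}, each genus-one generator is extendable, and in fact the construction there realizes the extension as a solid torus obtained by rotating a meridian disk by the involution $c^k$ (whose center is the unique fixed point on the disk) and taking the product with the principal $\langle a\rangle$-bundle along the core circle. The remaining pieces of the free $G$-cobordism are null-bordant and extend as disks times trivial bundles. Reassembling these handlebody pieces along the auxiliary cylinders and then undoing the quotients by the involutions in reverse order yields the required $3$-dimensional handlebody realizing the original action.

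The hard part will be making the auxiliary cylinder insertions and the subsequent unfolding $D_{2n}$-equivariant, rather than merely equivariant under the cyclic subgroup $\langle c\rangle$: reflections can swap pairs of ramification points or even pair ramification points with themselves, so the cylinders used to convert the non-free action into a free one must be chosen compatibly with both the rotation and the reflection part of the action. Equivalently, one must verify that the solid-torus model of Proposition \ref{prC1}, which was used only for abelian groups, can be enhanced so that the reflection $a$ extends over the meridian disk in a manner compatible with the rotation by $c^k$; concretely, embedding the solid torus in $\RR^3$ with its natural dihedral symmetry should provide this extension, but checking that the gluings of the handlebody pieces respect the full $D_{2n}$-action is the main technical point.
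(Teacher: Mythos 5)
Your proposal takes a genuinely different route from the paper, and the route has gaps that are not merely technical. The paper does not try to convert a general dihedral action into a free one by quotienting out involutions; it instead uses Proposition \ref{pr1} to reduce the cobordism data of the action to a finite product of the single genus-one piece $\langle b,a\rangle$ together with pairs of pants whose boundary monodromies lie in the cyclic commutator subgroup $\langle c^2\rangle$ (where the extension is easy), and then it constructs an explicit equivariant representative for $\langle b,a\rangle$: two spheres obtained by doubling $n$-gons when $n=2k$, one such sphere when $n=2k+1$, with the lifted $a$- and $b$-actions, after which disks around opposite fixed points are removed and joined by cylinders \emph{inside this explicit model}, leaving only the two poles as ramification points. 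The non-free locus is thus handled by an explicit construction, not by a reduction to the free case.

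The concrete problems with your plan are the following. First, ``taking successive quotients of $S$ by the involutions'' is not available for $D_{2n}$: a reflection $ac^j$ generates a non-normal subgroup for $n\geq 3$, so the quotient surface carries no induced action of $D_{2n}$ or of any quotient group, and the claimed reduction to a free $D_{2n}$-action on an auxiliary surface $S'$ is not well-defined; the analogous step in Theorem \ref{teore1} depends essentially on the abelian hypothesis. Second, Theorem \ref{teore1} is proved only under the hypothesis that the ramification points of signature $>2$ come in pairs with complementary monodromies; for an arbitrary dihedral action the stabilizers can be arbitrary cyclic subgroups and there is no reason the branch points admit an equivariant pairing with complementary monodromies, so ``pair up the remaining ramification points \ldots by gluing in equivariant cylinders'' is an assertion, not a construction. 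Third, even granting a free action on $S'$, identifying its class in $\mathcal{M}(D_{2n})$ with a power of $\langle c^k,a\rangle$ only controls the bordism class; to refold you need an explicit handlebody extension compatible with the full dihedral symmetry, which is precisely the point you defer as ``the main technical point.'' That deferred point is where the argument must actually be made, and it is exactly what the paper supplies with its polygon/sphere model; as written, your proposal identifies the correct generator (Corollary \ref{cordie}, Proposition \ref{prC1}) but does not prove the theorem.
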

\begin{proof}
By Proposition \ref{pr1}, the extension problem reduces to a finite product of the same $D_{2n}$-cobordism induced by the pair $\langle c,a\rangle\sim \langle ab,a\rangle\sim \langle b,a\rangle^a$. Thus, it is enough to solve the extension problem for the pair $\langle b,a\rangle$ and for the $D_{2n}$-cobordism over the pair of pants with entries in $[D_{2n},D_{2n}]=\langle c^2\rangle$. The last reduces to the extension of $G$-cobordisms over pair of pants where $G=[D_{2n},D_{2n}]$, which follows because the group is cyclic. For the pair $\langle b,a\rangle$ we construct a representative $D_{2n}$-cobordism and there are two cases to consider:
\begin{itemize}
    \item[(i)]For $n=2k$, we consider the disjoint union of two spheres, where each one is the gluing of two $n$-gons by the boundary. 
Denote by $T$ the operation of switching from one sphere to the other and by $S$ the operation of switching from one $n$-gon to the other in the same sphere. The action of $a$ lifts to the composition $Sa=aS$ and the action of $b$ lifts to the composition $Tb=bT$. We obtain $n=2k$ fixed points over each sphere plus the north a south poles. Then for each fixed point (except the north and south pole), we remove a small disc around it and we connect the holes for opposite fixed points with a cylinder. In a similar way as for abelian groups, this action extends with the north and south pole as unique ramification points. For $k=2$, in Figure \ref{foursq}, we draw an illustrator of this construction.
\begin{figure}
    \centering
    \includegraphics[scale=2.3]{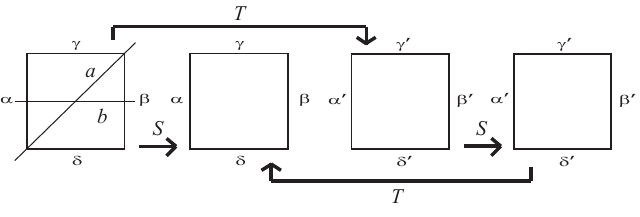}
    \caption{Representative for $\langle b,a\rangle$ with $n=2k$ ($k=1$).}
    \label{foursq}
\end{figure}
\item[(ii)]
For $n=2k+1$, we consider one sphere as the gluing of two $n$-gons by the boundary. 
Denote by $S$ the operation of switching from one $n$-gon to the other. Thus the action of $a$ lifts to the composition $Sa=aS$ and the action of $b$ lifts to the composition $Sb=bS$. We obtain $2n$ fixed points plus the north a south poles. Then we perform the same procedure to construct the extension of the action, as in the even case. 
\end{itemize}
\end{proof}

\bibliographystyle{amsalpha}
\bibliography{biblio}

\end{document}